\newcommand{\compactify}{} 
\newcommand{\R}{\mathbb{R}}
\newcommand{\eqdef}{\triangleq}
\newcommand{\cA}{{\cal A}}
\newcommand{\cB}{{\cal B}}
\newcommand{\cL}{{\cal L}}
\newcommand{\cO}{{\cal O}}
\newcommand{\cP}{{\cal P}}
\newcommand{\cX}{{\cal X}}
\newcommand{\cW}{{\cal W}}
\newcommand{\mA}{{\bf A}}
\newcommand{\mI}{{\bf I}}
\newcommand{\norm}[1]{\left\| #1 \right\|}
\definecolor{mydarkgreen}{RGB}{39,130,67}
\definecolor{mydarkred}{RGB}{192,47,25}
\newcommand{\green}{\color{mydarkgreen}}
\newcommand{\red}{\color{mydarkred}}
\newcommand{\cmark}{\green\ding{51}}%
\newcommand{\xmark}{\red\ding{55}}%
\theoremstyle{plain}
\newtheorem{theorem}{Theorem}[section]
\newtheorem{assumption}{Assumption}[section]
\newtheorem{lemma}[theorem]{Lemma}
\newtheorem{definition}[theorem]{Definition}
\newtheorem{corollary}[theorem]{Corollary}
\theoremstyle{remark}
\title{MISO is Making a Comeback\\ With  Better Proofs and Rates}
\author{%
  Xun Qian\\
 KAUST\thanks{King Abdullah University of Science and Technology, Thuwal, Saudi Arabia}\\\
  \texttt{xun.qian@kaust.edu.sa} \\
  \And
   Alibek Sailanbayev \\
   KAUST\\\
   \texttt{alibek.sailanbayev@kaust.edu.sa} \\
   \AND
   Konstantin Mishchenko \\
   KAUST\\
   \texttt{konstantin.mishchenko@kaust.edu.sa} \\
   \And
   Peter Richt\'{a}rik \\
   KAUST and MIPT\thanks{Moscow Institute of Physics and Technology, Dolgoprudny, Russia.}\\
   \texttt{peter.richtarik@kaust.edu.sa} \\
}
\begin{document}

\maketitle

\begin{abstract}
MISO~\cite{mairal2013optimization}, also known as Finito~\cite{defazio2014finito}, was one of the first stochastic variance reduced methods discovered, yet its popularity is fairly low. Its initial analysis was significantly limited by the so-called Big Data assumption. Although the assumption was lifted in subsequent work using negative momentum, this introduced a new parameter and required knowledge of strong convexity and smoothness constants, which is rarely possible in practice. We rehabilitate the method by introducing a new variant that needs only smoothness constant and does not have any extra parameters. Furthermore, when removing the strong convexity constant from the stepsize, we present a new analysis of the method, which no longer uses the assumption that every component is strongly convex. This allows us to also obtain so far unknown nonconvex convergence of MISO. To make the proposed method efficient in practice, we derive minibatching bounds with arbitrary uniform sampling that lead to linear speedup when the expected minibatch size is in a certain range. Our numerical experiments show that MISO is a serious competitor to SAGA and SVRG and sometimes outperforms them on real datasets.
\end{abstract}

\section{Introduction}

We study smooth finite-sum problem
\begin{equation}\label{primal}
\compactify \min \limits_{x\in \mathbb{R}^d} f(x) \eqdef \frac{1}{n}  \sum\limits_{i=1}^n f_i(x),
\end{equation}
where each summand $f_i:\R^d\to \R$ has Lipschitz continuous gradient. This simple formulation is ubiquitous in many areas and in particular in machine learning. Minimizing empirical loss is typically done using stochastic first-order information and variance reduction is a notable technique that sometimes makes convergence orders of magnitude faster. 

Stochastic updates are particularly attractive when the given problem is minimization of loss over training data. In this case, minibatch variance reduced methods provide scalability and ease of implementation. A handful of such methods exist by now, and it seems that they all fall in one of a few categories and the difference within each category is  in relatively minor tweaks only. For instance, there is a number of algorithms that differ from the stochastic variance reduced method (SVRG)~\cite{SVRG} only in the loop length~\cite{S2GD, L-SVRG}, inexact updates~\cite{lei2016less, lei2017non} or in using a different output of its loop~\cite{shang2018vr}. Similarly, there are multiple variants of Katyusha~\cite{L-SVRG, zhou2018simple}, SAGA~\cite{hofmann2015variance, raj2018svrg} and SDCA~\cite{shalev2015sdca, shalev2016sdca}. While some of these works propose valuable contribution for specific applications, in terms of ideas we see little novelty. In fact, it is now understood that there is almost no conceptual difference between SVRG and SAGA and virtually the same proof can be used for both of them~\cite{hofmann2015variance, L-SVRG}. 

Following these observations, we categorize methods as follows, ignoring acceleration and other modifications. Those that are related to SVRG, SARAH and SAGA use memory to directly approximate full gradient. SDCA, Point-SAGA and other proximal methods essentially do something similar but with an implicit type of updates. And, finally, majorization-minimization (MM) optimization algorithms were studied extensively in the past in various communities~\cite{Lange-MM-book}, and in the context of variance reduction this was also a topic of active research a few years ago~\cite{mairal2013optimization, defazio2014finito}.

{\bf MM methods.} A distinct feature of the MM approach is its universality. Unlike the unbiasedness and decreasing variance of SVRG or SAGA, MM is a principle that goes far beyond optimization. Probably for this reason variants of MISO have been rediscovered by several groups of authors~\cite{mishchenko2018delay, mishchenko2018distributed, mokhtari2018surpassing, mokhtari2018iqn}, not to mention that MISO itself was simultaneously discovered in two works~\cite{mairal2013optimization, defazio2014finito}. We believe that universality of MM provides a good start for designing new stochastic optimization methods and is going to lead to more future discoveries. For instance, one can go beyond first-order minimization and use Newton-like updates by minimizing a surrogate function with Hessian information. In fact, there already exists a BFGS analogue of cyclic MISO that was motivated by similar ideas~\cite{mokhtari2018iqn}. Likewise, there has been designed a generalization of MISO that additionally works with $m$ proximable functions~\cite{ryu2017proximal}, but unfortunately the method uses stepsizes of order $\nicefrac{1}{L}$ rather than $\nicefrac{n}{L}$.

Despite the universality of the MM approach employed by MISO,  several of its characteristics  lower its potential. The current analysis is based either on minimizing upper~\cite{mairal2013optimization} or lower bounds~\cite{mairal2013optimization, bietti2017stochastic, lin2015universal, defazio2014finito}. The upper bound minimization principle is slow and, thus, of little interest, while the lower bound approach suffers from the  need to know the strong convexity constant, which is rarely available, and when it can be estimated, the estimates are not sharp. Furthermore, in modern applications it is common to parallelize computation leading to the question of scalability. Moreover, lower bounds are simply not valid when the objective function is not convex, which limits the method even more. 

{\bf Notation.} A table summarizing key notation can be found in Appendix~\ref{sec:notation_appendix} (Table~\ref{tbl:notation}).

\section{Assumptions and Contributions} \label{sec:contrib}

\subsection{Assumptions}

In the rest of the paper  we assume  that $f$ is bounded below, and that problem~\ref{primal} has a nonempty set of minimizers $\cX^*$.  Our results will rely on a subset of the following smoothness and convexity assumptions.

\begin{assumption}[Smoothness] \label{as:smooth} The functions $f_i$ are $L$-smooth, i.e.,  their gradients are Lipschitz with constant $L \geq 0$. Further,  $f$ is $L_f$-smooth. (Note that,  clearly, $L_f\leq L$.) 
\end{assumption}

\begin{assumption}[Convexity] \label{as:convex}  The function $f$ is $\mu$-strongly convex, where $\mu\geq 0$. So, $\mu=0$ corresponds to convexity.
\end{assumption}

\subsection{Contributions}

\begin{table}
\begin{center}
\footnotesize
	\makebox[\textwidth][c]{
\begin{tabular}{|c|c|c|c|c|c|c|}
\hline
Reference 
& \begin{tabular}{c}Fast \\ scvx \\ rate \end{tabular} 
&  \begin{tabular}{c} Works for  \\ $\nicefrac{L}{\mu}\ge n$ \end{tabular} 
&  \begin{tabular}{c} Stepsize: \\ $\mu$ not \\ needed  \end{tabular} 
&  \begin{tabular}{c} minibatch \end{tabular} 
& \begin{tabular}{c} cvx  \\  rate \end{tabular} 
& \begin{tabular}{c} ncvx \\ rate \end{tabular} \\
\hline
\begin{tabular}{c}
 Mairal 2013~\cite{mairal2013optimization}\end{tabular} 
& \xmark & \cmark & $\nicefrac{1}{L}$\;  \cmark & \xmark  & \cmark & \xmark \\
\hline
\begin{tabular}{c}
 Mairal 2013~\cite{mairal2013optimization}\end{tabular} 
   & \cmark & \xmark &  $\nicefrac{1}{\mu}$ \; \xmark & \xmark & \xmark & \xmark  \\
\hline
\begin{tabular}{c} DCD 2014~\cite{defazio2014finito}\end{tabular}
& \cmark & \xmark & $\nicefrac{1}{\mu}$ \; \xmark  & \xmark  & \xmark & \xmark \\
\hline
\begin{tabular}{c} LMH 2015~\cite{lin2015universal} \end{tabular}  &\cmark & \cmark & $\nicefrac{1}{\mu}$ \;  \xmark & \xmark & \xmark & \xmark \\
\hline 
{\bf  THIS WORK} & \cmark & \cmark & $\nicefrac{n}{L}$ \; \cmark  & \cmark  & \cmark & \cmark \\
\hline
\end{tabular}
}
\caption{Summary of  contributions (svcx = strongly convex; cvx = convex; ncvx = non-convex; $\mu$ = strong convexity parameter; $L$ = smoothness parameter).
\label{tbl:contribution}
}
\end{center}
\end{table}

To break the above mentioned limits of the MM approach, we propose in this work a {\em new version of MISO}, and equip it with a  {\em more powerful convergence analysis}. Our contributions are  summarized in Table~\ref{tbl:contribution}. The comparison of the convergence rates can be found in Table \ref{tbl:rate} in Appendix \ref{sec:table3}. 

$\bullet$ {\bf First minibatch MISO.} We develop the {\em first minibatch variant of MISO}, one which interpolates between standard MISO and gradient descent.  

$\bullet$ We show that MISO can be run with a  {\em stepsize independent  of the strong convexity parameter}, which is typically unknown or hard to estimate. Unlike some versions of MISO, our variant works both in the big ($n\geq  \nicefrac{L}{\mu}$) and small ($n\leq\nicefrac{L}{\mu}$)  data regimes.   

$\bullet$ {\bf Strongly convex case.} In the strongly convex case, and for the minibatch size $\tau=n$, i.e., in the batch case, we obtain the $2 \cdot \nicefrac{ L_f}{\mu} \cdot \log \nicefrac{1}{\epsilon}$ rate of gradient descent~\cite{NesterovBook}. On the other hand, for $\tau=1$ the bound becomes $2\max\left\{n,  \nicefrac{L_f}{\mu} + \nicefrac{6L}{\mu} \right\} \log \nicefrac{1}{\epsilon}$, which is the same rate, up to small constants, as the rate of other known (non-accelerated) variance reduced methods, such as SDCA~\cite{SDCA, shalev2016sdca}, SVRG~\cite{SVRG}, S2GD~\cite{S2GD}, SAGA~\cite{SAGA} and L-SVRG~\cite{L-SVRG}. 

$\bullet$ {\bf Convex case.} Unlike all except one variant of MISO~\cite{mairal2013optimization}, we prove a sublinear $\cO(\nicefrac{L}{\epsilon})$ complexity in the convex case improving upon $\cO(\nicefrac{n L}{\epsilon})$ complexity of~\cite{mairal2013optimization}. 

$\bullet$ {\bf Nonconvex case} Finally, we give the first complexity analysis in the nonconvex case, matching the $\cO ( \nicefrac{n^{\nicefrac{2}{3}} L}{\epsilon} )$ bound of a rather complicated variant of SVRG~\cite{Reddi2016}. In contrast, MISO is much simpler and does not need to be adjusted to enjoy a good rate in the nonconvex setting.

\section{Minibatch Selection} \label{sec:mini}

As we shall see in the next section, in a key step of our method we choose a random subset $S\subseteq [n]\eqdef \{1,2,\dots,n\}$ (a ``sampling''), independently, from a user-defined distribution $\cP$ over all $2^n$ subsets of $[n]$. This distribution can be seen as a parameter of the method.  The following notions will be useful.

\begin{definition} We say that $\cP$ is {\em proper} if   $p_i \eqdef \mathbb{P}(i \in S) >0$ for all $i$. We say that $\cP$ is {\em uniform} if  $p_i=p_j$ for all $i,j$. The (expected) {\em minibatch size} of $S\sim \cP$ is the quantity $\tau  \eqdef \mathbb{E} [|S|]$.
\end{definition}

The simplest choice of a proper uniform distribution is to define $\cP$ by assigning probability $\tfrac{1}{n}$ to all single-element subsets $\{i\}$ of $[n]$. Slightly more generally, we can choose a fixed minibatch size $\tau \in [n]$, and define $\cP$ as the uniform distribution over subsets of $[n]$ of cardinality $\tau$ (this is called ``$\tau$-nice sampling''~\cite{PCDM}). However, we will perform our complexity analysis for any proper uniform distribution.

\begin{assumption} \label{as:uniform}
Distribution $\cP$ over subsets of $[n]$ used in Algorithm~\ref{sec:ALG} is proper and uniform. 
\end{assumption}

Given a proper uniform $\cP$,   it is easy to see that  $p_i = \frac{\tau}{n}$ for all $i$. Besides the expected minibatch size $\tau$, the distribution $\cP$ will enter our iteration complexity guarantees and influence the stepsize selection rule through two additional constants, $\cA \geq 0$ and $\cB\geq 0$, defined next.


\begin{assumption}\label{as:S}
Assuming $\cP$ is proper, let  $\cA\geq 0$ and $\cB \geq 0$ be such constants that  the inequality
	\begin{equation}\label{eq:sampling_asumption}
\compactify	\mathbb{E}_{S\sim \cP} \left[ \norm{ \sum \limits_{i\in S} \frac{a_i}{p_i} }^2 \right] \leq \cA\sum  \limits_{i=1}^n\|a_i\|^2 + \cB \norm{ \sum  \limits_{i=1}^na_i }^2
	\end{equation}
holds	for all vectors $a_1,\dots,a_n\in \R^d$. 
\end{assumption}

Our next result addresses the question of existence of constants $\cA$ and $\cB$. 

\begin{lemma}[Existence] \label{lm:CDexist}  For any proper  $\cP$, there are constants $\cA, \cB\geq 0$ satisfying \eqref{eq:sampling_asumption}. 
\end{lemma}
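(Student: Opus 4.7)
The plan is to observe that existence is almost immediate from Cauchy--Schwarz; all the substantive work in the paper will go into finding \emph{tight} values of $\cA$ and $\cB$ for specific samplings, but for the existence claim we just need some finite constants.

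First I would rewrite the random sum as a sum over all indices using indicators, namely $\sum_{i\in S}\frac{a_i}{p_i}=\sum_{i=1}^n \mathbb{1}[i\in S]\frac{a_i}{p_i}$. Applying the basic inequality $\|\sum_{i=1}^n v_i\|^2 \leq n\sum_{i=1}^n \|v_i\|^2$ (Cauchy--Schwarz / power mean) to the vectors $v_i=\mathbb{1}[i\in S]a_i/p_i$, and using $\mathbb{1}[i\in S]^2=\mathbb{1}[i\in S]$, I get the pointwise bound
\[
\Big\|\sum_{i\in S}\tfrac{a_i}{p_i}\Big\|^2 \;\leq\; n\sum_{i=1}^n \mathbb{1}[i\in S]\,\tfrac{\|a_i\|^2}{p_i^2}.
\]
Taking expectation on both sides and using $\mathbb{E}[\mathbb{1}[i\in S]]=p_i$ gives
\[
\mathbb{E}_{S\sim\cP}\Big\|\sum_{i\in S}\tfrac{a_i}{p_i}\Big\|^2 \;\leq\; n\sum_{i=1}^n \tfrac{\|a_i\|^2}{p_i} \;\leq\; \tfrac{n}{\min_i p_i}\sum_{i=1}^n \|a_i\|^2.
\]
Since $\cP$ is proper, $\min_i p_i>0$, so the constants $\cA \eqdef n/\min_i p_i$ and $\cB\eqdef 0$ satisfy \eqref{eq:sampling_asumption}.

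There is essentially no obstacle here: properness is precisely what guarantees that $1/p_i$ is finite, and without it the left-hand side could blow up (if some $p_i=0$ while $a_i\neq 0$, the vector $a_i/p_i$ is undefined). Conceptually I could equivalently phrase the argument by noting that the second-moment matrix $G\in\R^{n\times n}$ with $G_{ij}=\mathbb{P}(i,j\in S)/(p_ip_j)$ is well-defined and positive semidefinite, so $\cA=\lambda_{\max}(G)$, $\cB=0$ also works; but the Cauchy--Schwarz route is more elementary and avoids invoking spectral theory. I would note in a remark that these existence-level constants are far from sharp — the later sections will compute much better $(\cA,\cB)$ pairs for $\tau$-nice sampling and other natural distributions, and this is where the real utility of Assumption~\ref{as:S} lies.
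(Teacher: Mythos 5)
Your proof is correct and follows essentially the same route as the paper: a pointwise Cauchy--Schwarz/convexity bound on the squared norm of the sum, followed by taking expectation, with $\cB=0$. The only difference is that the paper applies the convexity bound with $|S|$ terms rather than $n$, which yields the slightly sharper constant $\cA=\max_{1\leq i\leq n}\mathbb{E}^i[|S|]/p_i$ instead of your $n/\min_i p_i$, but both suffice for the existence claim.
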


In view of the above result, Assumption~\ref{as:S} is not an assumption on the {\em availability} of constants $\cA,\cB$. Instead, the assumption just says that the constants need to be large enough. These constants will be used to set the stepsize, and will also appear in our complexity estimates. Below we compute these constants for the $\tau$-nice sampling.

\begin{lemma}\label{lm:tauniceCD}
If $\cP$ is the  $\tau$-nice sampling, then Assumption~\ref{as:S} holds  as long  as $\cA \geq \frac{n(n-\tau)}{\tau(n-1)}$ and $\cB \geq \frac{n(\tau-1)}{\tau(n-1)}$. 
\end{lemma}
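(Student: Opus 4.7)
The plan is to directly compute the left-hand side of \eqref{eq:sampling_asumption} in closed form for the $\tau$-nice sampling, and read off the values of $\cA$ and $\cB$. Since $|S|=\tau$ deterministically and $p_i = \tau/n$ for all $i$, the left-hand side equals $\tfrac{n^2}{\tau^2}\,\mathbb{E}\bigl[\|\sum_{i\in S} a_i\|^2\bigr]$, so it suffices to work with $\mathbb{E}\bigl[\|\sum_{i\in S} a_i\|^2\bigr]$.

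First, I would introduce indicator variables $\chi_i \eqdef \mathbf{1}[i\in S]$ and expand
\[
\left\|\sum_{i\in S} a_i\right\|^2 \;=\; \sum_{i=1}^n\sum_{j=1}^n \chi_i\chi_j \,\langle a_i, a_j\rangle.
\]
Taking expectations reduces everything to computing the two moments $\mathbb{E}[\chi_i^2]=\mathbb{P}(i\in S)$ and $\mathbb{E}[\chi_i\chi_j]=\mathbb{P}(i,j\in S)$ for $i\neq j$. A direct count of subsets of size $\tau$ gives $\mathbb{P}(i\in S)=\tau/n$ and, by the same counting of subsets containing a fixed pair, $\mathbb{P}(i,j\in S) = \binom{n-2}{\tau-2}/\binom{n}{\tau} = \tfrac{\tau(\tau-1)}{n(n-1)}$.

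Next, I would split the double sum into diagonal and off-diagonal parts and rewrite the off-diagonal part via the identity $\sum_{i\neq j}\langle a_i,a_j\rangle = \|\sum_i a_i\|^2 - \sum_i \|a_i\|^2$. This yields
\[
\mathbb{E}\!\left[\left\|\sum_{i\in S} a_i\right\|^2\right] = \frac{\tau}{n}\sum_{i=1}^n\|a_i\|^2 + \frac{\tau(\tau-1)}{n(n-1)}\!\left(\left\|\sum_{i=1}^n a_i\right\|^2 - \sum_{i=1}^n\|a_i\|^2\right).
\]
Collecting the $\sum_i\|a_i\|^2$ terms gives a coefficient of $\frac{\tau}{n}-\frac{\tau(\tau-1)}{n(n-1)} = \frac{\tau(n-\tau)}{n(n-1)}$.

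Finally, multiplying through by $n^2/\tau^2$ produces
\[
\mathbb{E}\!\left[\left\|\sum_{i\in S} \frac{a_i}{p_i}\right\|^2\right] = \frac{n(n-\tau)}{\tau(n-1)}\sum_{i=1}^n\|a_i\|^2 + \frac{n(\tau-1)}{\tau(n-1)}\left\|\sum_{i=1}^n a_i\right\|^2,
\]
which shows that Assumption~\ref{as:S} is satisfied with equality at $\cA=\frac{n(n-\tau)}{\tau(n-1)}$ and $\cB=\frac{n(\tau-1)}{\tau(n-1)}$, and therefore for any larger constants as well. There is no real obstacle here: the only subtle point is verifying the two combinatorial probabilities $\mathbb{P}(i\in S)$ and $\mathbb{P}(i,j\in S)$; everything else is routine algebra. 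It is also worth noting sanity checks: at $\tau=1$ we recover $\cA=n,\cB=0$, and at $\tau=n$ we recover $\cA=0,\cB=1$, both matching the deterministic extremes.
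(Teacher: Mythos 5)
Your proof is correct and follows essentially the same route as the paper: both compute the second moment exactly via the pairwise inclusion probability $\mathbb{P}(i,j\in S)=\tfrac{\tau(\tau-1)}{n(n-1)}$, split the double sum into diagonal and off-diagonal parts, and rearrange to obtain the identity with equality at $\cA=\tfrac{n(n-\tau)}{\tau(n-1)}$, $\cB=\tfrac{n(\tau-1)}{\tau(n-1)}$. The only cosmetic difference is your use of indicator variables versus the paper's direct summation over subsets.
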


\section{The Algorithm} \label{sec:ALG}

Our proposed method---Minibatch MISO (Algorithm~\ref{alg:miso})---is a generalized variant of the incremental MISO algorithm~\cite{mairal2013optimization}. 

The algorithm is initiated with auxiliary vectors $\phi_1^0, \phi_2^0, \dots, \phi_n^0\in \R^d$ which can take any values,  after which we compute the gradients $f^{\prime}(\phi_i^0)$ for all $i$ and set 
\[ \compactify x^{k} = \bar \phi^{k} - \frac{\gamma}{n}\sum \limits_{i=1}^n f_i'(\phi_i^{k})\] 
for $k=0$, where $\bar\phi^k$ is the average of all $\phi_i^k$. In fact, this relation will be used throughout to define the main optimization step of the method. In Step 4  we sample a set $S^k\sim \cP$. In Step 5, only auxiliary vectors $\phi_i^k$ for $i\in S^k$ are updated (to $x^k$); the rest are kept unchanged.  In Step 6 we maintain the average of the auxiliary vectors. Note that this can be done at cost $\cO(d |S^k| )$ arithmetic operations.  The key optimization step is Step 7, where we take a gradient-type step from the average vector $\bar \phi^{k+1} $, with stepsize $\gamma$. 

\begin{algorithm}[t]
   \caption{Minibatch MISO}
   \label{alg:miso}
\begin{algorithmic}[1]
\State {\bf Parameters:} Stepsize $\gamma>0$; initial auxiliary vectors $\phi_1^0, \dotsc, \phi_n^0 \in \R^d$; distribution $\cP$ over subsets of $\{1,2,\dots,n\}$
\State {\bf Initialize:} initialize average $\overline \phi^0 = \frac{1}{n}\sum_{i=1}^n \phi_i^0$ and starting point $x^0 = \overline \phi^0 - \frac{\gamma}{n}\sum_{i=1}^n f_i'(\phi_i^0)$  
   \For{$k=0,1,\dotsc$}
       \State Choose a random subset $S^k \subseteq \{1, \dotsc, n\}$ according to distribution $\cP$
       \State $\phi_i^{k+1} = \begin{cases} x^k & \quad i\in S^k \\
       \phi_i^k & \quad i\notin S^k \end{cases}$ \hfill $\triangleright$ update a random subset of auxiliary vectors
       \State $\bar \phi^{k+1} = \frac{1}{n}\sum_{i=1}^n \phi_i^{k+1}$ \hfill $\triangleright$ maintain average 
       \State $x^{k+1} = \bar \phi^{k+1} - \frac{\gamma}{n}\sum \limits_{i=1}^n f_i'(\phi_i^{k+1})$ \hfill $\triangleright$ take step
   \EndFor
\end{algorithmic}
\end{algorithm}

In the  lemma below, we show that the difference between two consecutive iterates $x^k$ and $x^{k+1}$ points in the gradient direction  $f^{\prime}(x^k)$, on average.	Throughout the paper, we use $\mathbb{E}_k[\cdot]$ to denote the conditional expectation on $x^k$ and $\phi^k_i$.

\begin{lemma} \label{lm:???} The iterates of Algorithm~\ref{alg:miso} for all $k\geq 0$ satisfy
	\begin{equation}\label{eq:xiugd97g9f}
\compactify	\mathbb{E}_k[x^{k+1} - x^k] = -\frac{\gamma\tau}{n}f^{\prime}(x^k)\;.
	\end{equation}

\end{lemma}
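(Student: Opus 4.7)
The plan is to subtract the two defining relations for $x^{k+1}$ and $x^k$ (Step 7 of the algorithm), take the conditional expectation $\mathbb{E}_k[\cdot]$, and use the fact that, under a proper uniform sampling, $p_i = \tau/n$ for all $i$ (stated right after Assumption~\ref{as:uniform}).

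Concretely, first I would write
\[
x^{k+1} - x^{k} = \bigl( \bar \phi^{k+1} - \bar \phi^k \bigr) \;-\; \frac{\gamma}{n}\sum_{i=1}^n \bigl( f_i'(\phi_i^{k+1}) - f_i'(\phi_i^k) \bigr).
\]
By Step 5 of the algorithm, the $i$-th summand on each side is nonzero only when $i \in S^k$, and in that case $\phi_i^{k+1} = x^k$. Taking $\mathbb{E}_k$ and using $\mathbb{P}(i \in S^k) = p_i = \tau/n$ gives
\[
\mathbb{E}_k\bigl[\bar \phi^{k+1} - \bar \phi^k\bigr] = \frac{\tau}{n^2}\sum_{i=1}^n \bigl(x^k - \phi_i^k\bigr) = \frac{\tau}{n}\bigl(x^k - \bar \phi^k\bigr),
\]
\[
\mathbb{E}_k\!\left[\sum_{i=1}^n \bigl(f_i'(\phi_i^{k+1}) - f_i'(\phi_i^k)\bigr)\right] = \frac{\tau}{n}\sum_{i=1}^n \bigl(f_i'(x^k) - f_i'(\phi_i^k)\bigr) = \tau f'(x^k) - \frac{\tau}{n}\sum_{i=1}^n f_i'(\phi_i^k),
\]
where in the last equality I used $f'(x^k) = \tfrac{1}{n}\sum_i f_i'(x^k)$.

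The key algebraic cancellation comes from invoking the defining relation for $x^k$ itself (Step 7 applied at iteration $k$, or the initialization when $k=0$): $x^k - \bar \phi^k = -\tfrac{\gamma}{n}\sum_{i=1}^n f_i'(\phi_i^k)$. Substituting this in gives
\[
\frac{\tau}{n}\bigl(x^k - \bar\phi^k\bigr) = -\frac{\gamma \tau}{n^2}\sum_{i=1}^n f_i'(\phi_i^k),
\]
which exactly cancels the stray $+\tfrac{\gamma\tau}{n^2}\sum_i f_i'(\phi_i^k)$ term produced by the gradient sum above, leaving $-\tfrac{\gamma\tau}{n} f'(x^k)$ as required.

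There is no real obstacle here --- the only subtlety is noticing that one has to use the identity $x^k = \bar\phi^k - \tfrac{\gamma}{n}\sum_i f_i'(\phi_i^k)$ a second time to kill the residual term involving $\sum_i f_i'(\phi_i^k)$. This is why the algorithm is designed so that the relation $x^{k} = \bar\phi^k - \tfrac{\gamma}{n}\sum_i f_i'(\phi_i^k)$ is maintained as an invariant for every $k$, not only at initialization. A tiny verification step would be to note that this invariant propagates automatically: Step 7 is precisely this relation at index $k+1$, and the base case $k=0$ is handled by the Initialize step.
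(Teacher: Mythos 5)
Your proof is correct and follows essentially the same route as the paper's: both reduce $x^{k+1}-x^k$ to a sum of the per-index increments $x^k-\phi_i^k-\gamma f_i'(x^k)+\gamma f_i'(\phi_i^k)$ over $i\in S^k$, take $\mathbb{E}_k$ using $p_i=\tau/n$, and cancel the residual via the maintained invariant $x^k=\bar\phi^k-\tfrac{\gamma}{n}\sum_i f_i'(\phi_i^k)$. The only difference is cosmetic: you take expectations of the two pieces separately before invoking the invariant, while the paper combines them into one sum first.
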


Hence, when viewed through the lens of the $x$ iterates only, Algorithm~\ref{alg:miso} can be seen as an instance of  stochastic gradient descent. Besides offering this insight, this lemma will be used to prove Theorem~\ref{Lem:key}, which plays a key role in the convergence analysis in the convex and strongly convex case.

 \section{Convergence Theory for Convex and Strongly Convex $f$} \label{sec:convex_and_str_convex}

The ``error'' quantity 
$
\compactify {\cal W}^k \eqdef \sum \limits_{i=1}^n\|\phi_i^{k} - x^* - \gamma f^{\prime}_i(\phi_i^{k}) + \gamma f^{\prime}_i(x^*)\|^2,
$
where $x^*$ is any fixed element of $\cX^*$,  plays a key role in our complexity results. First, we show that $\cW^k$ satisfies a recursion, relating its evolution to the distance between $x^k$ and $x^*$, and suboptimality gap $f(x^k)-f(x^*)$.

\begin{lemma}\label{lm:Dk+1} Assume that $f$ is convex and that functions $f_i$ are $L$-smooth. Let $\cP$ satisfy Assumption~\ref{as:uniform}. Then the iterates  of Algorithm~\ref{alg:miso}  satisfy the relation
	\[
	\compactify 
	\mathbb{E}_k[{\cal W}^{k+1}] \leq \left(1-\frac{\tau}{n}\right){\cal W}^k + \tau\|x^k - x^*\|^2 + 2L\tau\gamma^2 (f(x^k) - f(x^*)). 
	\]
\end{lemma}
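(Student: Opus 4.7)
The plan is to compute $\mathbb{E}_k[\cW^{k+1}]$ index-by-index. By Step~5 of Algorithm~\ref{alg:miso}, $\phi_i^{k+1}=x^k$ if $i\in S^k$ and $\phi_i^{k+1}=\phi_i^k$ otherwise, so under Assumption~\ref{as:uniform} (which gives $p_i=\tau/n$) we can split each summand of $\cW^{k+1}$ into a ``refreshed'' contribution weighted by $\tau/n$ and an ``unchanged'' contribution weighted by $1-\tau/n$. This immediately yields
\[
\mathbb{E}_k[\cW^{k+1}] = \Big(1-\tfrac{\tau}{n}\Big)\cW^k + \tfrac{\tau}{n}\sum_{i=1}^n\bigl\|x^k-x^*-\gamma(f_i'(x^k)-f_i'(x^*))\bigr\|^2,
\]
reducing the claim to bounding the ``refreshed'' sum by $n\|x^k-x^*\|^2+2Ln\gamma^2(f(x^k)-f(x^*))$.

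Next, I would expand each refreshed summand into three terms: $\|x^k-x^*\|^2$, a cross term $-2\gamma\langle x^k-x^*, f_i'(x^k)-f_i'(x^*)\rangle$, and $\gamma^2\|f_i'(x^k)-f_i'(x^*)\|^2$. Summing over $i$ collapses the cross term to $-2\gamma n\langle x^k-x^*, f'(x^k)-f'(x^*)\rangle$ via $\frac{1}{n}\sum_i f_i'=f'$; convexity of $f$ makes this non-positive, so it can be dropped. For the remaining squared-gradient term, I would invoke the standard co-coercivity inequality for each $L$-smooth convex $f_i$, namely $\|f_i'(x^k)-f_i'(x^*)\|^2\le 2L\bigl(f_i(x^k)-f_i(x^*)-\langle f_i'(x^*),x^k-x^*\rangle\bigr)$; summing over $i$ and using $\sum_i f_i'(x^*)=n f'(x^*)=0$ gives $\sum_i\|f_i'(x^k)-f_i'(x^*)\|^2\le 2Ln(f(x^k)-f(x^*))$. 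Plugging this back and multiplying through by $\tau/n$ yields exactly the claimed recursion.

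The one delicate point is the co-coercivity step, which requires convexity of each individual $f_i$, whereas Assumption~\ref{as:convex} is stated only for $f$. I would treat this as an implicit (and standard in MISO-style analyses) assumption for the convex/strongly convex theory, with the smoothness-only regime being handled separately in the nonconvex section via a cruder bound like $\|f_i'(x^k)-f_i'(x^*)\|^2\le L^2\|x^k-x^*\|^2$. Aside from this convexity subtlety, the remaining work is a bookkeeping exercise: the main structural insight is the clean conditional-expectation decoupling enabled by the uniform sampling probabilities.
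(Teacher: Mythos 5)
Your proposal matches the paper's proof essentially step for step: the same $\tau/n$ vs.\ $1-\tau/n$ conditional-expectation split, dropping the cross term via monotonicity of $f'$, and bounding $\sum_i\|f_i'(x^k)-f_i'(x^*)\|^2$ by $2nL(f(x^k)-f(x^*))$ using co-coercivity of each $f_i$ (the paper's inequality \eqref{eq:sumfi}). Your observation about the convexity of the individual $f_i$ is also on target --- the paper's auxiliary lemma for \eqref{eq:sumfi} is indeed stated under convexity of each $f_i$ even though the statement of Lemma~\ref{lm:Dk+1} only assumes $f$ convex, and all downstream results (Lemma~\ref{Lem:key}, Theorems~\ref{Th:stronglyconvex} and~\ref{Th:gconvex}) explicitly assume each $f_i$ is convex.
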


The above lemma can be used to establish the following key technical result.

\begin{lemma}\label{Lem:key} Assume that $f$ is convex and $L_f$-smooth,  and that functions $f_i$ are convex and $L$-smooth. Let $\cP$ satisfy Assumption~\ref{as:uniform}, and let $\cA,\cB$ be chosen as in Assumption~\ref{as:S}. For $p\geq 0$, define the Lyapunov function  
\[
\compactify
\Psi^k_p \eqdef  \|x^{k} - x^*\|^2 + \frac{(2+p)\cA\tau}{n^3}{\cal W}^{k}. 
\] 
Then  the iterates  of Algorithm~\ref{alg:miso}  satisfy the relation
	\begin{eqnarray*}
	\mathbb{E}_k \left[ \Psi^{k+1}_p  \right]	 
		&\leq& \compactify\left(  1 - \frac{\tau}{n} \left(\gamma \mu - \frac{\cA\tau p}{n^2} \right) \right)\|x^k -x^*\|^2 + \tfrac{\cA\tau (2+p)}{n^3} \left(  1 - \frac{\tau}{n}\cdot \frac{p}{2+p}  \right) {\cal W}^k  \\ 
		&&\compactify \quad - \frac{2\gamma\tau}{n}\left( 1 - \frac{\tau\gamma}{n}\left(\cB L_f + \frac{(4+p)\cA L}{n} \right)   \right) \left(f(x^k) - f(x^*)\right) \;. 
	\end{eqnarray*}	
\end{lemma}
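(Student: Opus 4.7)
The plan is to expand $\|x^{k+1}-x^*\|^2$ around $x^k$, bound $\mathbb{E}_k\|x^{k+1}-x^k\|^2$ using the sampling Assumption~\ref{as:S}, and then add $\tfrac{(2+p)\cA\tau}{n^3}$ times the $\cW$-recursion from Lemma~\ref{lm:Dk+1}. After taking $\mathbb{E}_k[\cdot]$ of the identity $\|x^{k+1}-x^*\|^2 = \|x^k-x^*\|^2 + 2\langle x^{k+1}-x^k, x^k-x^*\rangle + \|x^{k+1}-x^k\|^2$, Lemma~\ref{lm:???} converts the cross term into $-\tfrac{2\gamma\tau}{n}\langle f'(x^k), x^k-x^*\rangle$, and $\mu$-strong convexity of $f$ yields $\langle f'(x^k), x^k-x^*\rangle \ge f(x^k)-f(x^*) + \tfrac{\mu}{2}\|x^k-x^*\|^2$. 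This already produces the $-\tfrac{\gamma\mu\tau}{n}\|x^k-x^*\|^2$ contraction and the leading $-\tfrac{2\gamma\tau}{n}(f(x^k)-f(x^*))$ term.

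For the second moment, define $h_i \eqdef (x^k - \phi_i^k) - \gamma(f_i'(x^k) - f_i'(\phi_i^k))$, so that collecting Steps~5--7 of Algorithm~\ref{alg:miso} yields $x^{k+1} - x^k = \tfrac{1}{n}\sum_{i\in S^k} h_i$. Applying Assumption~\ref{as:S} with $a_i = \tfrac{\tau}{n^2}h_i$ (so that $\sum_{i\in S^k} a_i/p_i = x^{k+1}-x^k$) gives $\mathbb{E}_k\|x^{k+1}-x^k\|^2 \le \tfrac{\cA\tau^2}{n^4}\sum_i\|h_i\|^2 + \tfrac{\cB\tau^2}{n^4}\|\sum_i h_i\|^2$. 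A short computation using the algorithm's identity $x^k = \bar\phi^k - \tfrac{\gamma}{n}\sum_i f_i'(\phi_i^k)$ shows that $\sum_i h_i = -n\gamma f'(x^k)$; together with $L_f$-smoothness of $f$ (i.e.\ $\|f'(x^k)\|^2 \le 2L_f(f(x^k)-f(x^*))$), the $\cB$-term collapses to $\tfrac{2\cB L_f\tau^2\gamma^2}{n^2}(f(x^k)-f(x^*))$, which produces the $\cB L_f$ part of the final coefficient on $f(x^k)-f(x^*)$.

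It remains to bound $\sum_i\|h_i\|^2$. Writing $h_i = u_i - v_i$ with $u_i \eqdef x^k - x^* - \gamma(f_i'(x^k) - f_i'(x^*))$ and $v_i \eqdef \phi_i^k - x^* - \gamma(f_i'(\phi_i^k) - f_i'(x^*))$, Young's inequality yields $\sum\|h_i\|^2 \le 2\sum\|u_i\|^2 + 2\cW^k$, while the standard smoothness-convexity inequality $\sum_i\|f_i'(x^k)-f_i'(x^*)\|^2 \le 2Ln(f(x^k)-f(x^*))$ (using $\nabla f(x^*)=0$) plus convexity of $f$ gives $\sum\|u_i\|^2 \le n\|x^k-x^*\|^2 + 2Ln\gamma^2(f(x^k)-f(x^*))$. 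Adding $\tfrac{(2+p)\cA\tau}{n^3}$ times Lemma~\ref{lm:Dk+1} to the bound on $\mathbb{E}_k\|x^{k+1}-x^*\|^2$ and collecting coefficients, the $\cW^k$ coefficient telescopes: $\tfrac{2\cA\tau^2}{n^4} + \tfrac{(2+p)\cA\tau}{n^3}(1-\tfrac{\tau}{n}) = \tfrac{\cA\tau(2+p)}{n^3}\bigl(1-\tfrac{\tau}{n}\cdot\tfrac{p}{2+p}\bigr)$, while the $(f(x^k)-f(x^*))$ terms aggregate into $-\tfrac{2\gamma\tau}{n}\bigl(1-\tfrac{\tau\gamma}{n}(\cB L_f + \tfrac{(4+p)\cA L}{n})\bigr)$. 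The main obstacle is the bookkeeping for the $\|x^k-x^*\|^2$ coefficient: one must carefully balance the strong-convexity contraction $-\tfrac{\gamma\mu\tau}{n}$ against the $+\tfrac{2\cA\tau^2}{n^3}$ contribution coming from $\sum\|u_i\|^2$ and the $+\tfrac{(2+p)\cA\tau^2}{n^3}$ contribution coming from the $\tau\|x^k-x^*\|^2$ summand of Lemma~\ref{lm:Dk+1}, so that only the $p$-proportional residual $+\tfrac{\cA\tau^2 p}{n^3}$ survives to give the advertised factor $1 - \tfrac{\tau}{n}\bigl(\gamma\mu - \tfrac{\cA\tau p}{n^2}\bigr)$.
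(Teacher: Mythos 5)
Your overall skeleton is exactly the paper's: expand $\|x^{k+1}-x^*\|^2$, use Lemma~\ref{lm:???} and $\mu$-strong convexity for the cross term, bound $\mathbb{E}_k\|x^{k+1}-x^k\|^2$ via Assumption~\ref{as:S}, collapse the $\cB$-term with \eqref{eq:Lf}, and add the weighted recursion of Lemma~\ref{lm:Dk+1}. The $\cB$-part, the $\cW^k$ telescoping, and the $f$-gap coefficient all come out correctly. However, there is a genuine gap in how you bound $\sum_i\|h_i\|^2$. Writing $h_i=u_i-v_i$ and applying Young's inequality symmetrically gives $\sum_i\|h_i\|^2\le 2\sum_i\|u_i\|^2+2\cW^k$, and your own estimate $\sum_i\|u_i\|^2\le n\|x^k-x^*\|^2+2Ln\gamma^2(f(x^k)-f(x^*))$ then contributes $+\tfrac{2\cA\tau^2}{n^3}\|x^k-x^*\|^2$ to the bound. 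Added to the $+\tfrac{(2+p)\cA\tau^2}{n^3}\|x^k-x^*\|^2$ coming from Lemma~\ref{lm:Dk+1}, the total is $+\tfrac{(4+p)\cA\tau^2}{n^3}$, not the $+\tfrac{p\cA\tau^2}{n^3}$ required by the statement; the final sentence of your proposal asserts that ``only the $p$-proportional residual survives,'' but with your decomposition the arithmetic simply does not close, and the discrepancy $\tfrac{4\cA\tau^2}{n^3}$ cannot be absorbed for $\cA>0$.

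The missing idea is that the $\cA$-term must contribute a \emph{negative} multiple of $\|x^k-x^*\|^2$. The paper groups $h_i=\bigl(x^k-x^*-v_i\bigr)-\gamma\bigl(f_i'(x^k)-f_i'(x^*)\bigr)$ and applies Young's inequality to \emph{this} split, so that
\[
\sum_{i=1}^n\|h_i\|^2\le 2\sum_{i=1}^n\|x^k-x^*-v_i\|^2+2\gamma^2\sum_{i=1}^n\|f_i'(x^k)-f_i'(x^*)\|^2.
\]
The key is then the exact identity $\sum_{i=1}^n v_i=\sum_{i=1}^n\bigl(\phi_i^k-x^*-\gamma f_i'(\phi_i^k)+\gamma f_i'(x^*)\bigr)=n\bar\phi^k-nx^*-\gamma\sum_i f_i'(\phi_i^k)=n(x^k-x^*)$, which follows from Step~7 of Algorithm~\ref{alg:miso} and $f'(x^*)=0$ --- the same identity you already exploit for the $\cB$-term via $\sum_i h_i=-n\gamma f'(x^k)$. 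Expanding the square and using it gives $\sum_i\|x^k-x^*-v_i\|^2=n\|x^k-x^*\|^2-2\langle x^k-x^*,\sum_i v_i\rangle+\cW^k=\cW^k-n\|x^k-x^*\|^2$ \emph{exactly}, hence $\sum_i\|h_i\|^2\le 2\cW^k-2n\|x^k-x^*\|^2+4nL\gamma^2(f(x^k)-f(x^*))$. The sign flip turns your $+\tfrac{2\cA\tau^2}{n^3}$ into $-\tfrac{2\cA\tau^2}{n^3}$, and then $-2+(2+p)=p$ produces the advertised factor $1-\tfrac{\tau}{n}\bigl(\gamma\mu-\tfrac{\cA\tau p}{n^2}\bigr)$.
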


Lemma~\ref{Lem:key} provides a key step in the proofs of our main convergence theorems, Theorem~\ref{Th:stronglyconvex}~and~\ref{Th:gconvex}, described next.

\subsection{Strongly Convex Case}

Our main result in the strongly convex case ($\mu>0$),  stated next,  posits a linear convergence rate.

\begin{theorem}\label{Th:stronglyconvex}   Assume that $f$ is $\mu$-strongly convex with $\mu>0$ and $L_f$-smooth,  and that functions $f_i$ are convex and $L$-smooth. Let Assumptions~\ref{as:uniform}~and~\ref{as:S} hold. 	Let ${\cal L} \eqdef  \cB L_f+ \frac{6\cA L}{n} $ and $\gamma = \frac{n}{\tau {\cal L}}$. Then  for all $k\geq 0$ we have
	\begin{equation}\label{eq:b9f7g9f}
	\compactify \mathbb{E}[\|x^k - x^*\|^2] \leq \left( 1 - \min\left\{ \frac{\tau}{2n}, \frac{\mu}{2{\cal L}}, \frac{\mu n^2}{8\cA \tau {\cal L}}  \right\}  \right)^k \left( \|x^0-x^*\|^2 + \frac{4\cA\tau}{n^3}{\cal W}^0 \right). 
	\end{equation}
\end{theorem}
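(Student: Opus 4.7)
The plan is to instantiate Lemma~\ref{Lem:key} at a carefully chosen $p \in (0,2]$, read off a one-step contraction of $\Psi_p^k$, and iterate. First, the $f$-term in Lemma~\ref{Lem:key} can be dropped for free: the prescribed stepsize $\gamma = n/(\tau\cL)$ gives $\tau\gamma/n = 1/\cL$, so the factor $1-\frac{\tau\gamma}{n}\bigl(\cB L_f + \frac{(4+p)\cA L}{n}\bigr) = 1-\bigl(\cB L_f + \frac{(4+p)\cA L}{n}\bigr)/\cL$ is non-negative precisely when $p\le 2$, making the whole summand $-\frac{2\gamma\tau}{n}(\cdots)(f(x^k)-f(x^*))$ non-positive. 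What remains is
\[
\mathbb{E}_k[\Psi_p^{k+1}] \;\leq\; \alpha_p\,\|x^k-x^*\|^2 + \beta_p\,\frac{(2+p)\cA\tau}{n^3}\,\cW^k,
\]
where $\alpha_p := 1 - \mu/\cL + \cA\tau^2 p/n^3$ and $\beta_p := 1 - \tau p /\bigl(n(2+p)\bigr)$.

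For this to be a true contraction $\mathbb{E}_k[\Psi_p^{k+1}]\le(1-\rho)\Psi_p^k$ it suffices that $\alpha_p\le 1-\rho$ and $\beta_p\le 1-\rho$. I take
\[
p \;:=\; \min\!\bigl\{2,\; \mu n^3/(2\cA\tau^2\cL)\bigr\} \;\in\;(0,2].
\]
By construction $\cA\tau^2 p/n^3 \le \mu/(2\cL)$, so $\alpha_p \le 1-\mu/(2\cL)$. Since $p\le 2$ one has $2+p\le 4$ and hence $\tau p/(n(2+p)) \ge \tau p/(4n)$; expanding the $\min$ defining $p$,
\[
\frac{\tau p}{4n} \;=\; \frac{\tau}{4n}\,\min\!\bigl\{2,\tfrac{\mu n^3}{2\cA\tau^2\cL}\bigr\} \;=\; \min\!\Bigl\{\frac{\tau}{2n},\,\frac{\mu n^2}{8\cA\tau\cL}\Bigr\},
\]
so $\beta_p \le 1 - \min\{\tau/(2n),\,\mu n^2/(8\cA\tau\cL)\}$. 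With $\rho$ exactly as in \eqref{eq:b9f7g9f}, both bounds are met.

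Iterating the per-step contraction and taking total expectation gives $\mathbb{E}[\Psi_p^k]\le(1-\rho)^k\Psi_p^0$. Because $p\le 2$, the coefficient $(2+p)\cA\tau/n^3\le 4\cA\tau/n^3$, so $\Psi_p^0 \le \|x^0-x^*\|^2 + (4\cA\tau/n^3)\cW^0$; combined with the trivial $\|x^k-x^*\|^2\le\Psi_p^k$ this proves \eqref{eq:b9f7g9f}. The only non-routine step is the choice of $p$: the naive choice $p=2$ gives $\alpha_2 = 1 - \mu/\cL + 2\cA\tau^2/n^3$, which can exceed $1$ (whenever $\tau^2>\mu n^3/(2\cA\cL)$), so no contraction of $\|x^k-x^*\|^2$ is possible. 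Tuning $p$ so that $\cA\tau^2 p/n^3 = \mu/(2\cL)$ exactly balances the harmful ``positive feedback'' against half of the strong convexity gain, and as a by-product exposes the extra rate $\mu n^2/(8\cA\tau\cL)$ characteristic of the large-minibatch regime.
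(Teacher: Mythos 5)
Your proposal is correct and follows essentially the same route as the paper: drop the suboptimality term from Lemma~\ref{Lem:key} using $\gamma=n/(\tau\cL)$ and $p\le 2$, then tune $p$ so that the perturbation $\cA\tau^2 p/n^3$ eats at most half of the strong-convexity gain $\mu/\cL$. The only cosmetic difference is that you write $p=\min\{2,\ \mu n^3/(2\cA\tau^2\cL)\}$ and bound $p/(2+p)\ge p/4$ uniformly, whereas the paper splits the same choice into two explicit cases; both yield the identical three-term rate.
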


The above result is perhaps best understood by fixing a target error tolerance $\epsilon$, and using \eqref{eq:b9f7g9f} to find a bound on $k$ for which this error tolerance is guaranteed. Standard computations lead to the following corollary.

\begin{corollary}\label{co:stronglyconvexns}
Let $\cP$ be the $\tau$-nice sampling, and let $\gamma = \frac{n}{\tau L_f + 6L(n-\tau)/(n-1)}$. Assume $n\geq 4$. Then 
$$
\compactify k \geq 2 \max \left\{  \frac{n}{\tau}, \ \frac{L_f}{\mu} + \frac{6L}{\tau \mu}\cdot \frac{n-\tau}{n-1}  \right\} \log\left(  \frac{\|x^0-x^*\|^2 + \frac{4\cA\tau}{n^3}{\cal W}^0}{\epsilon}  \right) \quad \Rightarrow \quad \mathbb{E}[\|x^k - x^*\|^2] \leq \epsilon. 
$$
\end{corollary}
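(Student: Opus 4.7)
The plan is to specialize Theorem~\ref{Th:stronglyconvex} to the $\tau$-nice sampling and then convert the exponential contraction into an iteration complexity. The first step is to substitute the constants from Lemma~\ref{lm:tauniceCD}: I would take $\cA = \frac{n(n-\tau)}{\tau(n-1)}$, but for $\cB$ I would use the simple upper bound $\cB = 1$, which is admissible since $\frac{n(\tau-1)}{\tau(n-1)} \leq 1$ whenever $1 \leq \tau \leq n$. Plugging these into $\cL = \cB L_f + \frac{6\cA L}{n}$ gives $\cL = L_f + \frac{6L(n-\tau)}{\tau(n-1)}$, and then $\gamma = \frac{n}{\tau \cL}$ simplifies exactly to the stepsize stated in the corollary.

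Next I would evaluate the three candidates inside the $\min\{\cdot\}$ appearing in \eqref{eq:b9f7g9f}. The first candidate is immediately $\frac{\tau}{2n} = \frac{1}{2(n/\tau)}$, matching one entry of the $\max$. For the second, a direct computation gives
\[
\frac{\mu}{2\cL} \;=\; \frac{1}{2\left(\frac{L_f}{\mu} + \frac{6L(n-\tau)}{\mu\tau(n-1)}\right)},
\]
matching the other entry of the $\max$. So it remains to show that the third candidate $\frac{\mu n^2}{8\cA\tau\cL}$ never dominates the second, i.e., is no smaller than $\frac{\mu}{2\cL}$; this is the one nontrivial comparison and is where the hypothesis $n\geq 4$ enters.

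Using $\cA\tau = \frac{n(n-\tau)}{n-1}$, I would form the ratio
\[
\frac{\mu n^2/(8\cA\tau\cL)}{\mu/(2\cL)} \;=\; \frac{n(n-1)}{4(n-\tau)},
\]
and show it is $\geq 1$. This is equivalent to $n^2-5n+4\tau \geq 0$, and since $\tau\geq 1$ it suffices to verify $(n-1)(n-4)\geq 0$, which is exactly the content of $n\geq 4$. Therefore the minimum in Theorem~\ref{Th:stronglyconvex} reduces to $\min\bigl\{\frac{\tau}{2n}, \frac{\mu}{2\cL}\bigr\}$, which equals $\frac{1}{2M}$ where $M := \max\bigl\{\frac{n}{\tau},\, \frac{L_f}{\mu} + \frac{6L(n-\tau)}{\mu\tau(n-1)}\bigr\}$.

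Finally, I would apply the standard inequality $(1-\tfrac{1}{2M})^k \leq \exp(-\tfrac{k}{2M})$ to \eqref{eq:b9f7g9f} and solve $\exp(-\tfrac{k}{2M}) \cdot C \leq \epsilon$, where $C = \|x^0-x^*\|^2 + \tfrac{4\cA\tau}{n^3}\cW^0$. This yields $k \geq 2M \log(C/\epsilon)$, which is exactly the claimed bound. I do not anticipate any serious obstacles: everything is either direct substitution, the elementary inequality $(n-1)(n-4)\geq 0$, or the standard contraction-to-complexity conversion.
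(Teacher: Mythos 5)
Your proposal is correct and follows essentially the same route as the paper: specialize Theorem~\ref{Th:stronglyconvex} to $\tau$-nice sampling with $\cA = \frac{n(n-\tau)}{\tau(n-1)}$ and $\cB=1$, use $n\geq 4$ (via $(n-1)(n-4)\geq 0$) to show the third term $\frac{\mu n^2}{8\cA\tau\cL}$ is dominated by $\frac{\mu}{2\cL}$, and then apply the standard $(1-\nicefrac{1}{2M})^k\leq e^{-k/(2M)}$ conversion. The only difference is that you spell out this last conversion explicitly, which the paper leaves as "standard computations."
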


Note that the number of iterations decreases as $\tau$ increases.  For $\tau=n$, i.e., in the batch case, the bound becomes $2 \cdot \nicefrac{ L_f}{\mu} \cdot \log \nicefrac{1}{\epsilon}$, which is the rate of gradient descent~\cite{NesterovBook}. On the other hand, for $\tau=1$ the bound becomes $2\max\left\{n,  \nicefrac{L_f}{\mu} + \nicefrac{6L}{\mu} \right\} \log \nicefrac{1}{\epsilon}$, which is the same rate, up to small constants, as the rate of other known (non-accelerated) variance reduced methods, such as SDCA~\cite{SDCA, shalev2016sdca}, SVRG~\cite{SVRG}, S2GD~\cite{S2GD}, SAGA~\cite{SAGA} and LSVRG~\cite{L-SVRG}. This second bound is always worse than the first, especially in the big data regime (i.e., when $n\gg \nicefrac{L_f}{\mu}$), or when $L\gg L_f$. However, if $L=\cO(L_f)$ and if the condition number satisfies $\nicefrac{L_f}{\mu} = \cO(n)$, the two bounds are identical, up to a constant factor.
The general minibatch case interpolates between these extremes. 

\subsection{Convex Case}

Our main result in the convex case ($\mu=0$)  offers a $\cO(\nicefrac{1}{k})$ convergence rate.

\begin{theorem}\label{Th:gconvex} Assume that $f$ is convex and $L_f$-smooth,  and that functions $f_i$ are convex and $L$-smooth. Let Assumptions~\ref{as:uniform}~and~\ref{as:S} hold.  Choose stepsize $\gamma = \frac{n}{2\tau(\cB L_f + 4\cA L/n)}$. Then for $x^a$ chosen uniformly at random from $\{  x^i  \}_{i=0}^k$ we have 
$$
\compactify
\mathbb{E}[f(x^a)] - f(x^*) \leq 2\left(\cB L_f + \frac{4\cA L}{n}\right) \frac{ \left(\|x^0 - x^*\|^2 + \frac{2\cA \tau}{n^3}{\cal W}^0 \right)}{k+1}. 
$$

\end{theorem}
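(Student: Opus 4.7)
The plan is to specialize Lemma~\ref{Lem:key} to the convex setting ($\mu=0$) with the parameter $p=0$, which eliminates the growth term on $\|x^k-x^*\|^2$, and then to telescope the resulting descent inequality. Concretely, with $p=0$ the Lyapunov function reduces to $\Psi^k_0 = \|x^k-x^*\|^2 + \frac{2\cA\tau}{n^3}\cW^k$, and the bound in Lemma~\ref{Lem:key} becomes
\begin{equation*}
\mathbb{E}_k[\Psi^{k+1}_0] \;\leq\; \Psi^k_0 \;-\; \frac{2\gamma\tau}{n}\left(1 - \frac{\tau\gamma}{n}\left(\cB L_f + \frac{4\cA L}{n}\right)\right)\bigl(f(x^k)-f(x^*)\bigr).
\end{equation*}
This is exactly the shape of a ``one-step descent'' inequality on a functional value gap, which is the standard object for extracting an $\mathcal{O}(1/k)$ convex rate.

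Next I would plug in the prescribed stepsize $\gamma = \tfrac{n}{2\tau(\cB L_f + 4\cA L/n)}$. This is calibrated precisely so that $\tfrac{\tau\gamma}{n}(\cB L_f + 4\cA L/n) = \tfrac12$, turning the coefficient in parentheses into $\tfrac12$. The inequality then simplifies to
\begin{equation*}
\mathbb{E}_k[\Psi^{k+1}_0] \;\leq\; \Psi^k_0 \;-\; \frac{\gamma\tau}{n}\bigl(f(x^k)-f(x^*)\bigr).
\end{equation*}

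Taking total expectations, telescoping from $k=0$ to $k=K$, and using $\mathbb{E}[\Psi^{K+1}_0]\geq 0$, I obtain
\begin{equation*}
\frac{\gamma\tau}{n}\sum_{k=0}^{K}\mathbb{E}\bigl[f(x^k)-f(x^*)\bigr] \;\leq\; \Psi^0_0 \;=\; \|x^0-x^*\|^2 + \frac{2\cA\tau}{n^3}\cW^0.
\end{equation*}
Dividing by $(K+1)\cdot \tfrac{\gamma\tau}{n}$, recognizing that $\mathbb{E}[f(x^a)-f(x^*)] = \tfrac{1}{K+1}\sum_{k=0}^{K}\mathbb{E}[f(x^k)-f(x^*)]$ by the uniform random choice of $x^a$, and substituting $\tfrac{n}{\gamma\tau} = 2(\cB L_f + 4\cA L/n)$ yields the claimed bound.

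I do not expect a serious obstacle here: all the work is done inside Lemma~\ref{Lem:key}, and the only design choice is picking $p=0$ (so that the $\|x^k-x^*\|^2$ term does not inflate under the expectation, which would block telescoping) together with the stepsize that exactly halves the suboptimality coefficient. The only point requiring a moment of care is discarding the nonnegative terminal term $\mathbb{E}[\Psi^{K+1}_0]$, which is legitimate because $\Psi^k_0$ is a sum of squared norms and is therefore nonnegative deterministically.
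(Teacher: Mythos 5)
Your proposal is correct and follows exactly the paper's own argument: specialize Lemma~\ref{Lem:key} with $\mu=0$ and $p=0$ so that the coefficients of $\|x^k-x^*\|^2$ and $\mathcal{W}^k$ do not grow, use the prescribed stepsize to make the functional-gap coefficient equal to $\frac{\gamma\tau}{n}$, then telescope and average. No gaps; the observation that $\Psi^{K+1}_0\geq 0$ is the same step the paper uses implicitly when dropping the terminal term.
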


To shed more light on this rate, in the next corollary we specialize this result to the case of the $\tau$-nice sampling, for which formulas for $\cA$ and $\cB$ are readily available from Lemma~\ref{lm:tauniceCD} (we choose $\cA = \frac{n(n-\tau)}{\tau(n-1)}$ and $\cB = 1$).

\begin{corollary}
 Let $\cP$ be the $\tau$-nice sampling, and let $\gamma = \frac{n}{2\tau(L_f + \frac{4L}{\tau}\cdot \frac{n-\tau}{n-1})}$. If $x^a$ is chosen uniformly at random from $\{  x^i  \}_{i=0}^k$,  then 
$$
\compactify
k \geq 2\left(  L_f + \frac{4L}{\tau} \cdot \frac{n-\tau}{n-1}  \right) \frac{ \left(\|x^0 - x^*\|^2 + \frac{2\cA \tau}{n^3}{\cal W}^0 \right)}{\epsilon} \quad \Rightarrow \quad \mathbb{E}[f(x^a)] - f(x^*) \leq \epsilon \;. 
$$
\end{corollary}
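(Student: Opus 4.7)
The plan is to treat this corollary as a routine specialization of Theorem~\ref{Th:gconvex} to the $\tau$-nice sampling, with no new analytical content—just substitution and inversion of the $\cO(1/k)$ bound into an iteration complexity.

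First, I would invoke Lemma~\ref{lm:tauniceCD} to fix admissible constants $\cA$ and $\cB$ for the $\tau$-nice sampling. The lemma gives $\cA \geq \tfrac{n(n-\tau)}{\tau(n-1)}$ and $\cB \geq \tfrac{n(\tau-1)}{\tau(n-1)}$, and I would take $\cA = \tfrac{n(n-\tau)}{\tau(n-1)}$ and, observing that $\tfrac{n(\tau-1)}{\tau(n-1)}\leq 1$ whenever $\tau\leq n$, take $\cB = 1$. Both choices are valid under Assumption~\ref{as:S}, which is all that Theorem~\ref{Th:gconvex} requires.

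Next, I would substitute these constants into the composite smoothness quantity appearing in the theorem:
\[
\cB L_f + \frac{4\cA L}{n} \;=\; L_f + \frac{4L}{n}\cdot\frac{n(n-\tau)}{\tau(n-1)} \;=\; L_f + \frac{4L}{\tau}\cdot\frac{n-\tau}{n-1}.
\]
This immediately matches the stepsize choice $\gamma = \tfrac{n}{2\tau(\cB L_f + 4\cA L/n)}$ from Theorem~\ref{Th:gconvex} with the stepsize stated in the corollary, so Theorem~\ref{Th:gconvex} applies verbatim and yields
\[
\mathbb{E}[f(x^a)] - f(x^*) \;\leq\; 2\left(L_f + \frac{4L}{\tau}\cdot\frac{n-\tau}{n-1}\right)\frac{\|x^0-x^*\|^2 + \frac{2\cA\tau}{n^3}\cW^0}{k+1}.
\]

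Finally, to convert the rate to iteration complexity I would simply demand the right-hand side be at most $\epsilon$ and solve for $k$: this gives a lower bound on $k+1$ which is in turn implied by the stated lower bound on $k$ (using $k+1 \geq k$). That completes the deduction. I do not anticipate any real obstacle; the only minor point requiring care is verifying that $\cB=1$ is an admissible (rather than tight) choice in Lemma~\ref{lm:tauniceCD}, and that the resulting stepsize coincides exactly with the one in the corollary statement—both of which are one-line checks.
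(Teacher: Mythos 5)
Your proposal is correct and is exactly the argument the paper intends: choose $\cA = \tfrac{n(n-\tau)}{\tau(n-1)}$ and $\cB=1$ from Lemma~\ref{lm:tauniceCD}, check that $\cB L_f + \tfrac{4\cA L}{n}$ simplifies to the expression in the corollary's stepsize, and invert the $\cO(1/(k+1))$ bound of Theorem~\ref{Th:gconvex}. The paper considers this so routine that it gives no separate proof in the appendix, only the sentence preceding the corollary indicating the same substitution; your one-line checks (admissibility of $\cB=1$ and the match of stepsizes) are the only points needing verification, and you handle both.
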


Yet again, the number of iterations decreases as $\tau$ increases. For $\tau=n$, i.e., in the batch case, we have $\cA =0$, the bound becomes $2L_f \|x^0 - x^*\|^2\frac{1}{\epsilon} $, which is the standard rate of gradient descent~\cite{NesterovBook}. Fro $\tau=1$, on the other hand, we get $\cA =n$, and the bounds simplifies to $2(L_f+4L)(\|x^0 - x^*\|^2 + \frac{2}{n^2}\cW^0)  \frac{1}{\epsilon}$, which is  the same $\cO(\nicefrac{L}{\epsilon})$ rate of other variance reduced methods in this regime.

\subsection{More commentary}

In summary, both in the convex and strongly convex cases, our rate for minibatch MISO interpolates between the rate of gradient descent and state-of-the-art rates of other more popular (non-accelerated)  variance reduced methods. This closes a gap in the literature. Moreover, and perhaps more importantly, unlike \cite{defazio2014finito, lin2015universal}, our step size selection rules in the strongly convex case do not depend on the knowledge of the strong convexity parameter $\mu$, which is often hard to estimate.

\section{Convergence Theory for Nonconvex $f$} \label{sec:non-convex}

In this section we establish iteration complexity bounds for Minibatch MISO without any convexity assumptions. Our goal will be to find a point with a small gradient. We establish  the first rates for a MISO-type method in the nonconvex setting. 

\subsection{Technical lemmas}

The following two technical lemmas play a key role in our analysis. The first result provides a bound on the distance of two consecutive iterates.

\begin{lemma}\label{lm:dxnonconvex} Assume that functions $f_i$ are  $L$-smooth. Let Assumptions~\ref{as:uniform}~and~\ref{as:S} hold. Then
$$
\compactify 
\mathbb{E}_k[\|x^{k+1} - x^k\|^2] \leq \frac{2\tau^2\cA (1+\gamma^2L^2)}{n^3} \cdot \frac{1}{n}\sum \limits_{i=1}^n \|x^k - \phi^k_i \|^2 + \frac{\tau^2\gamma^2\cB}{n^2}\|f^{\prime}(x^k)\|^2. 
$$
\end{lemma}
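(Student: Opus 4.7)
The plan is to express $x^{k+1} - x^k$ as a random sum indexed by $S^k$, then invoke Assumption~\ref{as:S} to bound its expected squared norm by quantities that separately give the two terms on the right-hand side.

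First I would compute $x^{k+1} - x^k$ explicitly. Using the recipe $x^{k+1} = \bar\phi^{k+1} - \frac{\gamma}{n}\sum_i f_i'(\phi_i^{k+1})$ together with the update rule (so that for $i\notin S^k$ the $i$-th summand cancels, while for $i\in S^k$ we get $\phi_i^{k+1} - \phi_i^k = x^k - \phi_i^k$ and $f_i'(\phi_i^{k+1}) - f_i'(\phi_i^k) = f_i'(x^k) - f_i'(\phi_i^k)$), I obtain
\[
x^{k+1} - x^k \;=\; \sum_{i\in S^k} \frac{a_i}{p_i}, \qquad a_i \;\eqdef\; \frac{\tau}{n^2}\bigl[(x^k-\phi_i^k) - \gamma\bigl(f_i'(x^k)-f_i'(\phi_i^k)\bigr)\bigr],
\]
using $p_i = \tau/n$ from Assumption~\ref{as:uniform}. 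Applying Assumption~\ref{as:S} with these $a_i$ yields
\[
\mathbb{E}_k\bigl[\|x^{k+1}-x^k\|^2\bigr] \;\leq\; \cA \sum_{i=1}^n \|a_i\|^2 + \cB \Bigl\|\sum_{i=1}^n a_i \Bigr\|^2.
\]

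Next I would handle the two terms separately. For the $\cA$-term, I apply $\|u+v\|^2 \leq 2\|u\|^2 + 2\|v\|^2$ and the $L$-smoothness of $f_i$ (which gives $\|f_i'(x^k)-f_i'(\phi_i^k)\|^2 \leq L^2 \|x^k-\phi_i^k\|^2$) to get $\|a_i\|^2 \leq \frac{2\tau^2(1+\gamma^2 L^2)}{n^4} \|x^k - \phi_i^k\|^2$, which summed over $i$ produces exactly the first term of the claimed bound. For the $\cB$-term, the key trick is to notice that the sum $\sum_i a_i$ telescopes: indeed,
\[
\sum_{i=1}^n a_i \;=\; \frac{\tau}{n^2}\Bigl[n(x^k-\bar\phi^k) - \gamma n f'(x^k) + \gamma \sum_{i=1}^n f_i'(\phi_i^k)\Bigr],
\]
and since Step 7 of the algorithm guarantees $x^k - \bar\phi^k = -\frac{\gamma}{n}\sum_i f_i'(\phi_i^k)$, the first and last terms inside the bracket cancel, leaving $\sum_i a_i = -\frac{\tau\gamma}{n}f'(x^k)$. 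Squaring gives $\frac{\tau^2\gamma^2}{n^2}\|f'(x^k)\|^2$, which combined with $\cB$ is the second term of the bound.

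Combining both terms yields the lemma. The only nontrivial step is the cancellation that collapses $\sum_i a_i$ onto $f'(x^k)$; everything else is bookkeeping and a single application of $L$-smoothness. I do not expect a genuine obstacle here, since convexity is not needed and Assumption~\ref{as:S} is tailor-made for bounding sampled sums of this form.
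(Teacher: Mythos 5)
Your proposal is correct and follows essentially the same route as the paper: write $x^{k+1}-x^k$ as the sampled sum from equation~\eqref{eq:xk+1-k}, apply Assumption~\ref{as:S}, bound the $\cA$-term via Young's inequality and $L$-smoothness, and collapse the $\cB$-term to $-\frac{\tau\gamma}{n}f'(x^k)$ using the defining relation of $x^k$. The only cosmetic difference is that you fold the factor $\frac{\tau}{n^2}$ into the vectors $a_i$ rather than pulling $\frac{\tau^2}{n^4}$ out in front.
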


The quantity $\mathbb{E}^i[|S|] \eqdef \mathbb{E}[|S| \;|\; i\in S]$ is the expected minibatch size of minibatches which contain $i$. In the rest of this section, let $M \eqdef \max \limits_{1\leq i\leq n}\mathbb{E}^i[|S|]$.  Our second technical lemma gives a bound on the average distance between the iterates and the auxiliary variables.

\begin{lemma}\label{lm:phik+1nonconvex} Let Assumption~\ref{as:uniform} hold. Then for any $\beta>0$, we have 
\begin{eqnarray*}
\compactify
\mathbb{E}_k\left[ \frac{1}{n} \sum \limits_{i=1}^n\|x^{k+1} - \phi^{k+1}_i \|^2 \right] &\leq& 
\compactify \left(  \frac{6M}{\tau} +1  \right) \mathbb{E}_k[\|x^{k+1} - x^k\|^2] + \frac{\gamma \tau}{n\beta} \|f^{\prime}(x^k)\|^2  \\ 
&& \compactify \quad + \left( 1 - \frac{\tau}{n} + \frac{\tau^2}{6n^2} + \frac{\gamma \tau \beta}{n}  \right) \frac{1}{n} \sum \limits_{i=1}^n \|x^k - \phi^k_i\|^2.  
\end{eqnarray*}

\end{lemma}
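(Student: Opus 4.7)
My plan is to start with the identity
\[
\frac{1}{n}\sum_i \|x^{k+1} - \phi_i^{k+1}\|^2 = \|x^{k+1} - x^k\|^2 + 2\langle x^{k+1} - x^k,\, x^k - \bar\phi^{k+1}\rangle + \frac{1}{n}\sum_{i \notin S^k}\|x^k - \phi_i^k\|^2,
\]
obtained by expanding $x^{k+1} - \phi_i^{k+1} = (x^{k+1}-x^k) + (x^k - \phi_i^{k+1})$, squaring, summing over $i$, and using $\phi_i^{k+1} = x^k$ on $i \in S^k$ to zero out those summands in the last sum. Taking $\mathbb{E}_k$ of that last sum contributes $(1-\frac{\tau}{n})\cdot \frac{1}{n}\sum_i \|x^k - \phi_i^k\|^2$ by uniformity of $\cP$, while the leading $\|x^{k+1}-x^k\|^2$ already accounts for the ``$+1$'' in the coefficient $\frac{6M}{\tau}+1$. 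All remaining work is in the cross term.

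For the cross term, I would use the algorithmic relation $\bar\phi^{k+1} = \bar\phi^k + \frac{1}{n}\sum_{i\in S^k}(x^k-\phi_i^k)$ to write $x^k - \bar\phi^{k+1} = \bar a - \frac{1}{n} w$, where $\bar a := \frac{1}{n}\sum_i(x^k - \phi_i^k) = x^k - \bar\phi^k$ is conditionally deterministic and $w := \sum_{i \in S^k}(x^k - \phi_i^k)$ carries all the randomness of $S^k$. The cross term then splits into a drift piece $2\langle x^{k+1}-x^k, \bar a\rangle$ and a fluctuation piece $-\frac{2}{n}\langle x^{k+1}-x^k, w\rangle$, to be bounded by two separate Young's inequalities. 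For the drift piece, Lemma~\ref{lm:???} gives $\mathbb{E}_k[x^{k+1}-x^k] = -\frac{\gamma\tau}{n}f^{\prime}(x^k)$, so $\mathbb{E}_k[2\langle x^{k+1}-x^k, \bar a\rangle] = -\frac{2\gamma\tau}{n}\langle f^{\prime}(x^k), \bar a\rangle$; applying $2ab \leq a^2/\beta + \beta b^2$ with the free parameter $\beta$, together with the Jensen bound $\|\bar a\|^2 \leq \frac{1}{n}\sum_i\|x^k - \phi_i^k\|^2$, produces exactly the $\frac{\gamma\tau}{n\beta}\|f^{\prime}(x^k)\|^2$ and $\frac{\gamma\tau\beta}{n}\cdot\frac{1}{n}\sum_i\|x^k - \phi_i^k\|^2$ contributions in the claim.

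For the fluctuation piece I would apply Young's with the tailored parameter $\lambda = 6M/\tau$, yielding $-\frac{2}{n}\langle x^{k+1}-x^k, w\rangle \leq \frac{6M}{\tau}\|x^{k+1}-x^k\|^2 + \frac{\tau}{6Mn^2}\|w\|^2$; this recovers the $\frac{6M}{\tau}$ coefficient at once. The main obstacle is then the matching bound on $\mathbb{E}_k[\|w\|^2]$ in which $M$ must cancel cleanly so as to leave exactly $\tau^2/(6n^2)$. I would use Cauchy--Schwarz, $\|w\|^2 \leq |S^k|\sum_{i\in S^k}\|x^k-\phi_i^k\|^2$, followed by the key combinatorial identity $\mathbb{E}[|S^k|\,\mathbf{1}_{i \in S^k}] = p_i\,\mathbb{E}^i[|S^k|] \leq \frac{\tau M}{n}$ (this is where uniformity $p_i = \tau/n$ and the constant $M$ enter), to conclude $\mathbb{E}_k[\|w\|^2] \leq \frac{\tau M}{n}\sum_i\|x^k-\phi_i^k\|^2$. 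Multiplying by $\frac{\tau}{6Mn^2}$, the $M$ cancels and leaves precisely the $\frac{\tau^2}{6n^2}\cdot\frac{1}{n}\sum_i\|x^k-\phi_i^k\|^2$ term, so collecting the four contributions (leading $\|x^{k+1}-x^k\|^2$, the $(1-\tau/n)$ term, drift, and fluctuation) finishes the proof.
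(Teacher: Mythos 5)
Your proof is correct and follows essentially the same route as the paper's: the same three-term expansion of $\|x^{k+1}-\phi_i^{k+1}\|^2$, the same split of the cross term into a full-sum drift part (handled via $\mathbb{E}_k[x^{k+1}-x^k]=-\tfrac{\gamma\tau}{n}f'(x^k)$, Young with parameter $\beta$, and Jensen) and an $S^k$-restricted fluctuation part (Young with parameter $6M/\tau$ plus the identity $\mathbb{E}[|S|\mathbf{1}_{i\in S}]=p_i\,\mathbb{E}^i[|S|]\leq \tau M/n$). Your single global Young's inequality on $\langle x^{k+1}-x^k, w\rangle$ is a minor cosmetic simplification of the paper's per-term version and even avoids its separate $|S^k|=0$ case.
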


\subsection{Main result}

We are now ready to state our main convergence result in the nonconvex case.

\begin{theorem}\label{Th:nonconvex1}
	Let $f$ be $L_f$-smooth and $f_i$ be $L$-smooth. Let Assumptions~\ref{as:uniform}~and~\ref{as:S} hold.  Assume $\frac{n^2}{\tau \cA} \geq 24\left(\frac{6M}{\tau} + 1\right)$. Consider the Lyapunov function
	$$
	\compactify
	\Psi^k \eqdef f(x^k) + \alpha \cdot \frac{1}{n} \sum \limits_{i=1}^n \|x^k - \phi^k_i\|^2, 
	$$
	where $\alpha = \frac{\beta}{q}$, $q = \max\{  4, 2\cB (\nicefrac{6M}{\tau} +1)  \}$, and $\beta = \frac{1}{2\gamma}$. If the stepsize satisfies 
	\begin{equation}\label{eq:gammanonconvex}
\compactify	
	\gamma \leq \min \left\{ \frac{n}{2\cB L_f\tau}, \frac{n^2/\tau \cA}{24qL_f},  \frac{(n^2/\tau \cA)^{\frac{1}{3}}}{(24qL_fL^2)^{\frac{1}{3}}} , \frac{(n^2/\tau \cA)^{\frac{1}{2}}}{(24(6M/\tau +1)L^2)^{\frac{1}{2}}}  \right\}, 
	\end{equation}
	then 
$	
	\mathbb{E}_k [\Psi^{k+1}] \leq \Psi^k - \frac{\gamma \tau}{4n}\|\nabla f(x^k) \|^2. 
$
\end{theorem}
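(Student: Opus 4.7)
The plan is to start from the $L_f$-smoothness of $f$, take conditional expectation using the unbiased-direction identity of Lemma~\ref{lm:???}, then splice in Lemmas~\ref{lm:dxnonconvex} and~\ref{lm:phik+1nonconvex}, and finally verify that the Lyapunov contraction holds by matching coefficients under the stated stepsize ceiling. Write $D_k \eqdef \tfrac{1}{n}\sum_{i=1}^n\|x^k-\phi_i^k\|^2$ so that $\Psi^k = f(x^k)+\alpha D_k$.

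First, $L_f$-smoothness gives $f(x^{k+1})\leq f(x^k)+\langle \nabla f(x^k),x^{k+1}-x^k\rangle + \tfrac{L_f}{2}\|x^{k+1}-x^k\|^2$. Taking $\mathbb{E}_k[\cdot]$ and using Lemma~\ref{lm:???} collapses the inner product to $-\tfrac{\gamma\tau}{n}\|\nabla f(x^k)\|^2$. Adding $\alpha\,\mathbb{E}_k[D_{k+1}]$ and applying Lemma~\ref{lm:phik+1nonconvex} with $\beta=\tfrac{1}{2\gamma}$ yields an estimate of the form
\[
\mathbb{E}_k[\Psi^{k+1}] \leq f(x^k) - \tfrac{\gamma\tau}{n}\|\nabla f(x^k)\|^2 + C_1\,\mathbb{E}_k[\|x^{k+1}-x^k\|^2] + \tfrac{\alpha\gamma\tau}{n\beta}\|\nabla f(x^k)\|^2 + \alpha\Bigl(1-\tfrac{\tau}{n}+\tfrac{\tau^2}{6n^2}+\tfrac{\gamma\tau\beta}{n}\Bigr)D_k,
\]
with $C_1\eqdef \tfrac{L_f}{2}+\alpha(\tfrac{6M}{\tau}+1)$. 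Next, plug in Lemma~\ref{lm:dxnonconvex} to replace $\mathbb{E}_k[\|x^{k+1}-x^k\|^2]$ by a linear combination of $D_k$ and $\|\nabla f(x^k)\|^2$.

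At this stage the inequality is purely in terms of $D_k$ and $\|\nabla f(x^k)\|^2$, and the remaining work is to verify two coefficient inequalities. Using $\alpha=\tfrac{1}{2\gamma q}$ and $\gamma\beta=\tfrac12$, the $D_k$-coefficient, after subtracting the $\alpha D_k$ carried over from $\Psi^k$, needs to be nonpositive; since $\tau\leq n$, the intrinsic contraction part $\alpha(-\tfrac{\tau}{n}+\tfrac{\tau^2}{6n^2}+\tfrac{\gamma\tau\beta}{n})\leq -\tfrac{\alpha\tau}{3n}$, so it suffices to show
\[
\Bigl(L_f\gamma q + \tfrac{6M}{\tau}+1\Bigr)\bigl(1+\gamma^2 L^2\bigr) \leq \tfrac{1}{6}\cdot\tfrac{n^2}{\tau\cA}.
\]
Split the left side into four terms $L_f\gamma q$, $L_f\gamma^3 qL^2$, $(\tfrac{6M}{\tau}+1)$, $(\tfrac{6M}{\tau}+1)\gamma^2L^2$: each of the four stepsize bounds in \eqref{eq:gammanonconvex} is tailored so that one of these four terms is bounded by $\tfrac{1}{24}\cdot\tfrac{n^2}{\tau\cA}$ (for the third one this uses the standing hypothesis $\tfrac{n^2}{\tau\cA}\geq 24(\tfrac{6M}{\tau}+1)$). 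Summing the four pieces delivers the required bound.

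For the $\|\nabla f(x^k)\|^2$-coefficient, the requirement is $-\tfrac{\gamma\tau}{n}+C_1\tfrac{\tau^2\gamma^2\cB}{n^2}+\tfrac{\alpha\gamma\tau}{n\beta}\leq-\tfrac{\gamma\tau}{4n}$. Using $\alpha/\beta=1/q$ and $q\geq 4$ handles the $\tfrac{\alpha\gamma\tau}{n\beta}$ piece with a $\tfrac14$ slack; the bound $\gamma\leq\tfrac{n}{2\cB L_f\tau}$ handles the $\tfrac{L_f}{2}$ part of $C_1$; and the definition $q\geq 2\cB(\tfrac{6M}{\tau}+1)$ (together with $\alpha=\tfrac{1}{2\gamma q}$) handles the $\alpha(\tfrac{6M}{\tau}+1)$ part of $C_1$. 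Combining the two coefficient inequalities gives exactly $\mathbb{E}_k[\Psi^{k+1}]\leq \Psi^k-\tfrac{\gamma\tau}{4n}\|\nabla f(x^k)\|^2$.

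The main obstacle is purely bookkeeping: there are four cross-terms of the form $\gamma^a L_f^b L^c$ multiplying either $D_k$ or $\|\nabla f(x^k)\|^2$, and one must carefully line up each of the four entries in the $\min$ in \eqref{eq:gammanonconvex} with the single cross-term it is meant to control. Once that dictionary is fixed and the shorthand $R=n^2/(\tau\cA)$ is used, the verification of both coefficient inequalities reduces to adding four copies of $R/24$ to get $R/6$, as outlined above.
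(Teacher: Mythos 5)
Your proposal is correct and follows essentially the same route as the paper's proof: descent from $L_f$-smoothness combined with Lemma~\ref{lm:???}, then Lemmas~\ref{lm:phik+1nonconvex} and~\ref{lm:dxnonconvex}, and finally the same two coefficient verifications, with each of the four entries of the $\min$ in \eqref{eq:gammanonconvex} (plus the hypothesis $\tfrac{n^2}{\tau\cA}\geq 24(\tfrac{6M}{\tau}+1)$) controlling one cross-term at level $\tfrac{1}{24}\cdot\tfrac{n^2}{\tau\cA}$, summing to $\tfrac{1}{6}\cdot\tfrac{n^2}{\tau\cA}$ exactly as in the paper. No gaps.
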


While the above result does not spell out the rate explicitly, it provided an easy to analyze recursion, which leads to more interpretable corollaries.  The first one  gives an $\cO(\nicefrac{1}{\epsilon})$ rate for any minibatch strategy $\cP$ satisfying Assumption~\ref{as:S}.

\begin{corollary}\label{co:nonconvex1}
	Let $x^a$ be chosen uniformly at random from $\{  x^i  \}_{i=0}^k$ and  $\gamma$ satisfy (\ref{eq:gammanonconvex}). Assume $\frac{n^2}{\tau \cA} \geq 24(\frac{6M}{\tau} + 1)$. 
	Then 
	$$
	\compactify	
	\mathbb{E}[\|\nabla f(x^a)\|^2 ] \leq \frac{4n}{\gamma \tau}\cdot \frac{f(x^0) - f(x^*)}{k+1}. 
	$$
	If $\gamma$ is equal to the upper bound in (\ref{eq:gammanonconvex}), then $\mathbb{E}[\|\nabla f(x^a)\|^2 ]  \leq \epsilon$ as long as 
	$$
	\compactify	
	k \geq \left(  \frac{4n}{\tau} \max \left\{\frac{2\cB L_f\tau}{n}, \frac{24qL_f}{n^2/\tau \cA},  \frac{(24qL_fL^2)^{\frac{1}{3}}}{(n^2/\tau \cA)^{\frac{1}{3}}} , \frac{(24(6M/\tau +1)L^2)^{\frac{1}{2}}}{(n^2/\tau \cA)^{\frac{1}{2}}}    \right\}  \frac{f(x^0) - f(x^*)}{\epsilon}  \right). 
	$$
\end{corollary}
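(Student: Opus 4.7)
The plan is to extract the corollary directly from the one-step descent inequality of Theorem~\ref{Th:nonconvex1} by telescoping and using the definition of $x^a$. The main obstacle will be purely bookkeeping: tracking the four competing terms inside the $\min$ for the stepsize and translating them into the final $\max$ that appears in the complexity bound.

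First I would take total expectation in Theorem~\ref{Th:nonconvex1} to obtain
\[
\mathbb{E}[\Psi^{k+1}] \leq \mathbb{E}[\Psi^{k}] - \frac{\gamma\tau}{4n}\,\mathbb{E}\bigl[\|\nabla f(x^k)\|^2\bigr],
\]
valid for each $k\ge 0$ under the stepsize restriction~\eqref{eq:gammanonconvex}. Summing from $0$ to $k$ and telescoping yields
\[
\frac{\gamma\tau}{4n}\sum_{i=0}^{k}\mathbb{E}\bigl[\|\nabla f(x^i)\|^2\bigr] \;\leq\; \Psi^0 - \mathbb{E}[\Psi^{k+1}].
\]
Since $\alpha\geq 0$ and $\|\cdot\|^2\geq 0$, we have $\Psi^{k+1}\geq f(x^{k+1})\geq f(x^*)$, so the right-hand side is bounded above by $\Psi^0 - f(x^*)$. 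Assuming the natural initialization $\phi_i^0 = x^0$ for all $i$ (so that the auxiliary term in $\Psi^0$ vanishes), this simplifies to $f(x^0) - f(x^*)$.

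Next I would use the definition of $x^a$: because $x^a$ is drawn uniformly from $\{x^0,\dots,x^k\}$,
\[
\mathbb{E}\bigl[\|\nabla f(x^a)\|^2\bigr] = \frac{1}{k+1}\sum_{i=0}^{k}\mathbb{E}\bigl[\|\nabla f(x^i)\|^2\bigr].
\]
Combining with the telescoped inequality and dividing by $\tfrac{(k+1)\gamma\tau}{4n}$ gives the first claim
\[
\mathbb{E}\bigl[\|\nabla f(x^a)\|^2\bigr] \;\leq\; \frac{4n}{\gamma\tau}\cdot \frac{f(x^0)-f(x^*)}{k+1}.
\]

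For the second claim, I would instantiate $\gamma$ as the upper bound in~\eqref{eq:gammanonconvex}. Because the upper bound is a $\min$ of four positive terms, $1/\gamma$ equals the $\max$ of the reciprocals of those four terms, giving
\[
\frac{1}{\gamma} = \max\!\left\{\frac{2\cB L_f\tau}{n},\; \frac{24qL_f}{n^2/\tau\cA},\; \frac{(24qL_fL^2)^{1/3}}{(n^2/\tau\cA)^{1/3}},\; \frac{(24(6M/\tau+1)L^2)^{1/2}}{(n^2/\tau\cA)^{1/2}}\right\}.
\]
Substituting this into the first bound and requiring the resulting expression to be at most $\epsilon$, then solving for $k$, produces exactly the stated iteration complexity. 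The only subtlety is to confirm that the hypothesis $n^2/(\tau\cA)\geq 24(6M/\tau+1)$ is precisely what is needed to invoke Theorem~\ref{Th:nonconvex1}, and that no term in the stepsize $\min$ has been dropped or collapsed when forming the final $\max$; this is a mechanical check rather than a conceptual one.
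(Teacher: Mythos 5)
Your proof is correct and follows essentially the same route as the paper: telescope the descent inequality of Theorem~\ref{Th:nonconvex1}, average over the uniform choice of $x^a$, and read off $1/\gamma$ as the maximum of the reciprocals of the four terms in the stepsize bound. You are in fact slightly more careful than the paper in flagging that replacing $\Psi^0$ by $f(x^0)$ requires the auxiliary term $\alpha\cdot\frac{1}{n}\sum_{i=1}^n\|x^0-\phi_i^0\|^2$ to vanish (e.g.\ via $\phi_i^0=x^0$) or else to be carried into the bound, a point the paper's own proof passes over silently.
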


We now specialize the above result to $\tau$-nice sampling. In view of Lemma~\ref{lm:tauniceCD}, we can choose $\cA = \frac{n(n-\tau)}{\tau(n-1)}$ and $\cB =1$. Also, we have $M = \tau$ for $\tau$-nice sampling. Hence, from Corollary \ref{co:nonconvex1}, we can obtain the following result. 

\begin{corollary}\label{co:nonconvex2}
For $\tau$-nice sampling, let $x^a$ be chosen uniformly at random from $\{  x^i  \}_{i=0}^k$ and  $\gamma$ be equal to the upper bound in (\ref{eq:gammanonconvex}) with $q = 14$, $\cA = \frac{n(n-\tau)}{\tau(n-1)}$, and $\cB =1$. Assume $n \geq 168$. Then $\mathbb{E}[\|\nabla f(x^a)\|^2 ]  \leq \epsilon$ as long as 
$$
\compactify
k \geq {\cal O} \left( \left(  L_f + \frac{n^{\frac{2}{3}} (L_fL^2)^{\frac{1}{3}}}{\tau} \cdot \frac{(n-\tau)^{\frac{1}{3}}}{(n-1)^{\frac{1}{3}}}  + \frac{\sqrt{n}L}{\tau} \cdot \frac{(n-\tau)^{\frac{1}{2}}}{(n-1)^{\frac{1}{2}}}\right) \frac{f(x^0) - f(x^*)}{\epsilon} \right). 
$$
\end{corollary}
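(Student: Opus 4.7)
The plan is to directly specialize Corollary \ref{co:nonconvex1} to the $\tau$-nice sampling by substituting the sampling-dependent quantities $\cA$, $\cB$, $M$, $q$ and simplifying the four-way maximum. From Lemma \ref{lm:tauniceCD} I take $\cA = \frac{n(n-\tau)}{\tau(n-1)}$ and $\cB = 1$. Because $|S| = \tau$ almost surely under $\tau$-nice sampling, $M = \max_i \mathbb{E}^i[|S|] = \tau$, which yields $\frac{6M}{\tau}+1 = 7$ and $q = \max\{4,\, 2\cB \cdot 7\} = 14$, matching the values used in the statement.

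Before invoking the corollary I would verify its hypothesis $\frac{n^2}{\tau \cA} \geq 24\bigl(\frac{6M}{\tau}+1\bigr) = 168$. A direct computation gives $\frac{n^2}{\tau \cA} = \frac{n(n-1)}{n-\tau}$, and since this quantity is increasing in $\tau$, its minimum over $\tau \in \{1,\dots,n-1\}$ is attained at $\tau = 1$ with value $n$; hence the standing assumption $n \geq 168$ suffices. Now I substitute $\cA$, $\cB$, $M$, $q$ into each of the four entries of the maximum in Corollary~\ref{co:nonconvex1} and multiply by the leading factor $\frac{4n}{\tau}$. The first entry contributes $8L_f$, producing the $L_f$ summand. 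The second entry reduces to $\frac{96\cdot 14\,L_f(n-\tau)}{\tau(n-1)}$, which is $\cO(L_f)$ since $(n-\tau)/(\tau(n-1)) \le 1$ for $\tau \ge 1$, and hence is absorbed into the first summand. The third entry becomes $\frac{4 n^{2/3}(24 \cdot 14\, L_f L^2)^{1/3}(n-\tau)^{1/3}}{\tau (n-1)^{1/3}}$, giving the middle summand. The fourth entry becomes $\frac{4 \sqrt{168\,n}\, L\, (n-\tau)^{1/2}}{\tau (n-1)^{1/2}}$, giving the final summand.

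The only conceptual step is the observation that the second entry of the maximum is dominated by the first and therefore vanishes inside $\cO(\cdot)$; everything else is purely mechanical substitution followed by collecting the three surviving terms into the claimed rate. I do not anticipate any real obstacle here beyond bookkeeping with the powers of $n$, $n-\tau$, and $n-1$ when pulling the factor $\frac{4n}{\tau}$ through the cube- and square-roots.
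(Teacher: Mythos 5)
Your proposal is correct and follows the same route the paper takes: the paper simply invokes Corollary~\ref{co:nonconvex1} with $\cA = \frac{n(n-\tau)}{\tau(n-1)}$, $\cB=1$, $M=\tau$, $q=14$ and notes that $\frac{n^2}{\tau\cA}=\frac{n(n-1)}{n-\tau}\geq n\geq 168$ verifies the hypothesis. Your explicit bookkeeping of the four terms (and the observation that the second is dominated by the first) is exactly the intended, omitted computation.
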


For $\tau=n$, the above rate simplifies to $L_f \frac{f(x^0) - f(x^*)}{\epsilon}$, which is the rate of gradient descent.  For $\tau=1$, the rate simplifies to
$\left(L_f+ n^{\nicefrac{2}{3}}\left(L_f L^2\right)^{\nicefrac{1}{3}} + n^{\nicefrac{1}{2}}L \right)  \frac{f(x^0) - f(x^*)}{\epsilon} = \cO\left( \frac{n^{\nicefrac{2}{3}} L}{\epsilon} \right)$. This is the same rate as the rate of a (complicated) variant of SVRG~\cite{Reddi2016}. In contrast, MISO is much simpler and does not need to be adjusted to enjoy a good rate in the nonconvex setting.

\section{Experiments}\label{sec:exp}
In this section we run experiments and show performance of the minibatch MISO on real datasets. Firstly, we show how minibatch size $\tau$ affects the convergence of the minibatch MISO, and in the second part we compare the minibatch MISO with well-known minibatch variance reduced algorithms - minibatch SAGA and minibatch SVRG. Our experiments is performed on the regularized logistic regression problem:
\begin{align} \label{eq:logistic_regression}
\compactify \min_{x\in \R^d} f(x) = \frac{1}{n} \sum \limits_{i=1}^n \log (1 + \exp(-y_i \mA_{i:} x)) + \frac{\lambda}{2} \|x\|^2,
\end{align}
where $\mathbf{A} \in \R^{n \times d}$, $y \in \R^n$ and the regularization parameter $\lambda > 0$. Note that for this problem each $f_i$ is $L_i$ smooth where $L_i =  \frac{1}{4} \| \mA_{i:} \|^2 + \lambda$ and $L = \max_i L_i$, and $L_f$ = the largest eigenvalue of $\frac{1}{4n} \mathbf{A}^\top \mathbf{A} + \lambda \mI$. The problem is strongly convex with the strongly convexity constant $\mu =  \lambda$. \\

\subsection{Varying minibatch sizes} 

\begin{figure}[ht]
	\centering
	\begin{minipage}{.4\textwidth}
		\centering
		\includegraphics[width=\linewidth]{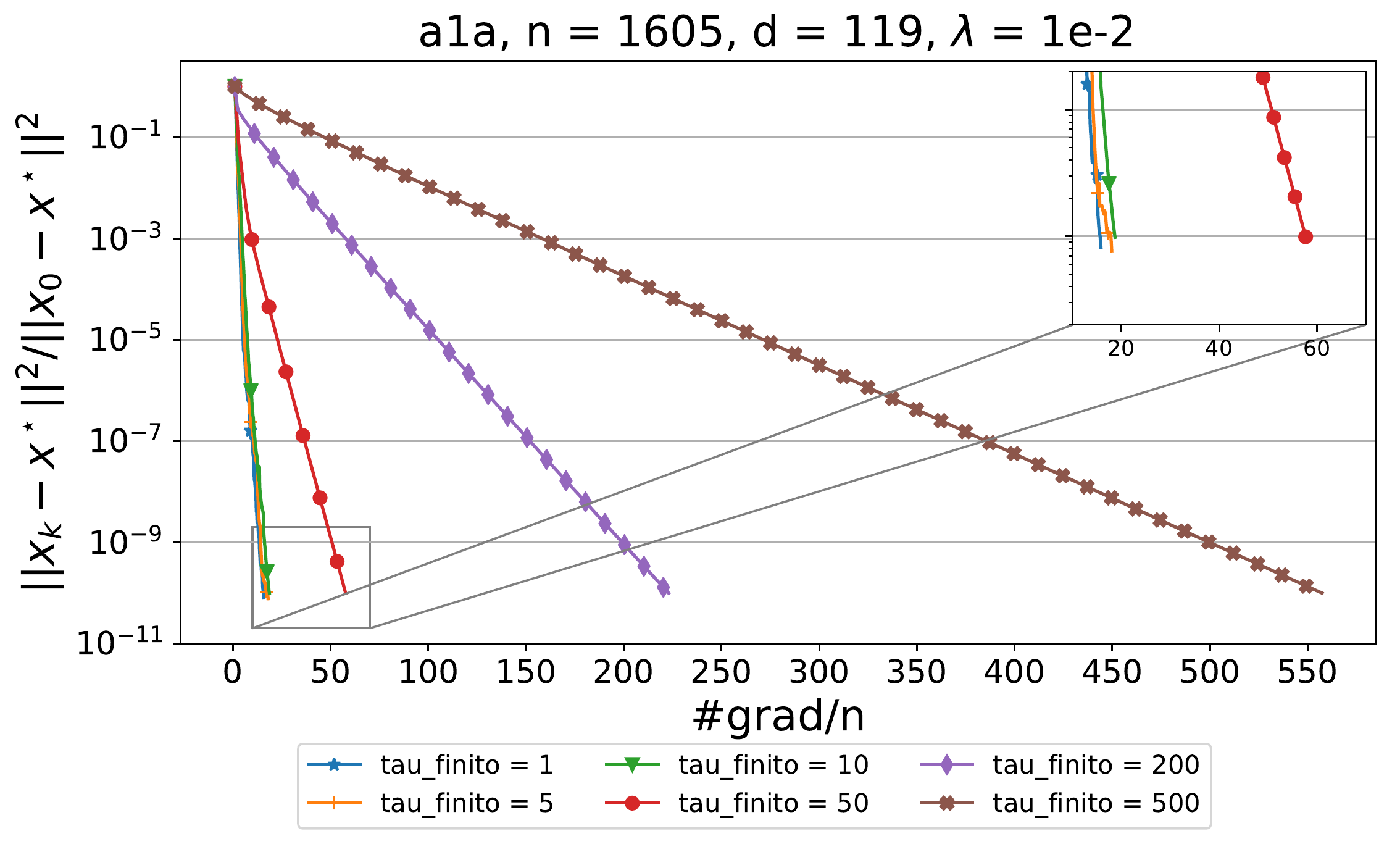}
	\end{minipage}%
	\begin{minipage}{.4\textwidth}
		\centering
		\includegraphics[width=\linewidth]{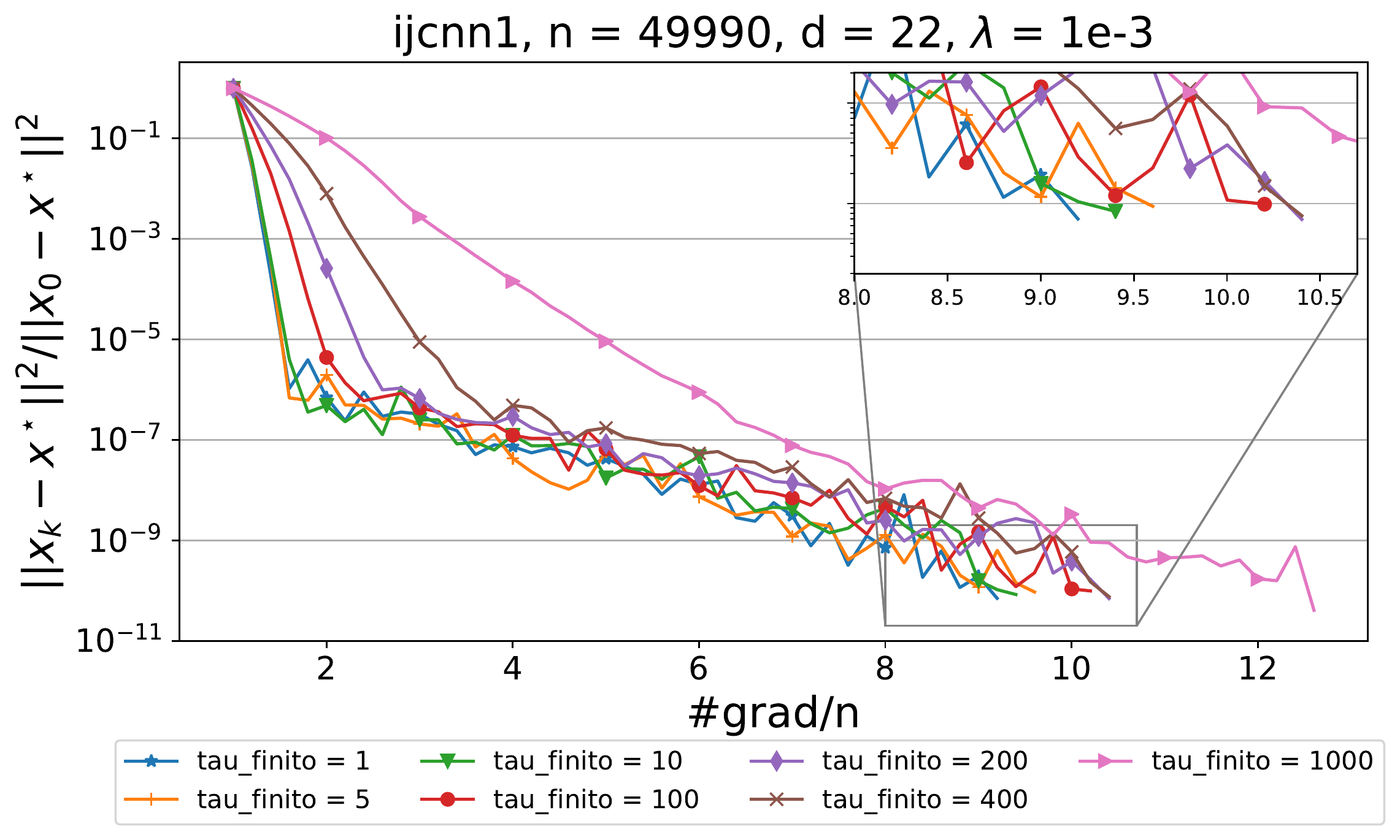}
	\end{minipage}
	\centering
	\begin{minipage}{.4\textwidth}
		\centering
		\includegraphics[width=\linewidth]{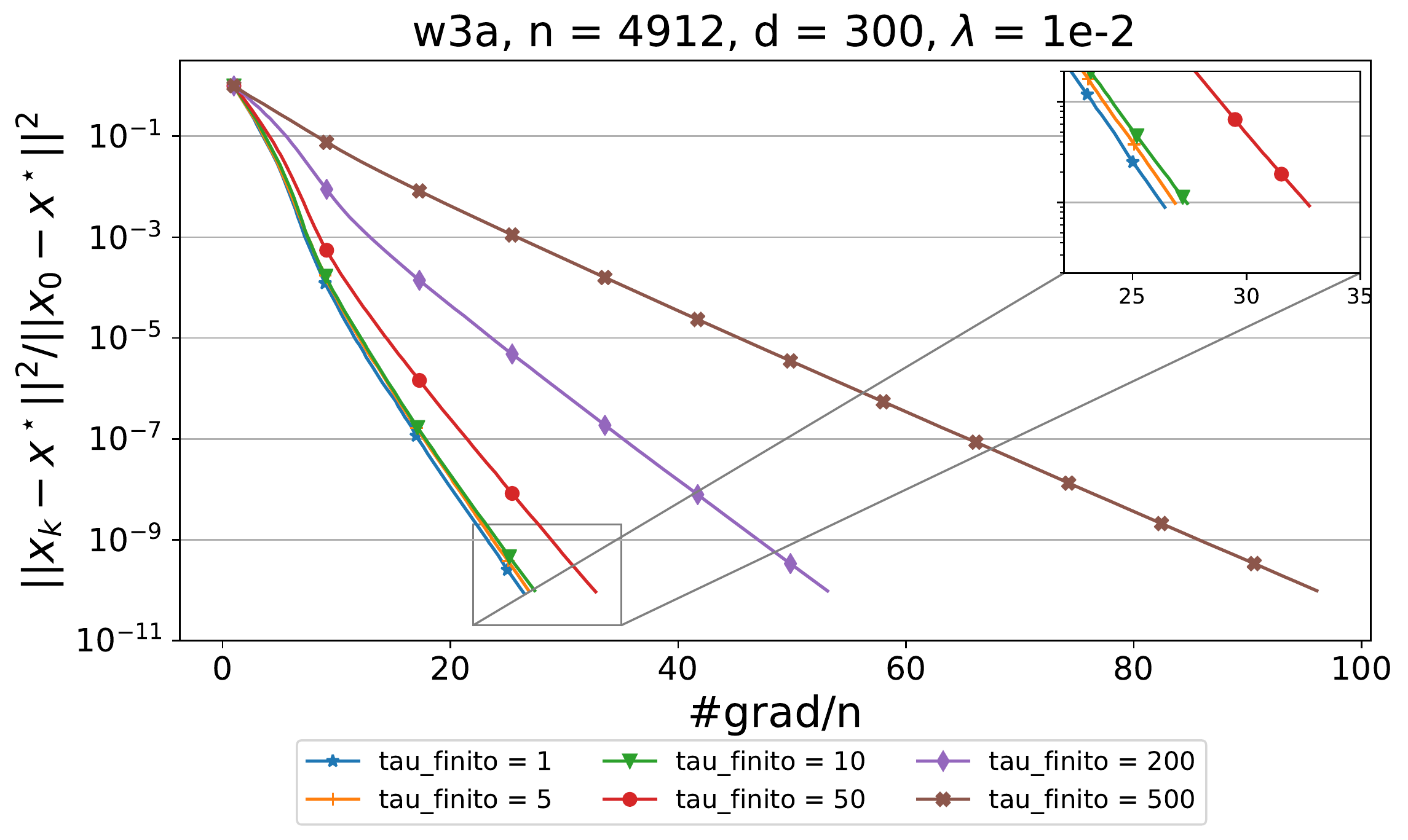}
	\end{minipage}%
	\begin{minipage}{.4\textwidth}
		\centering
		\includegraphics[width=\linewidth]{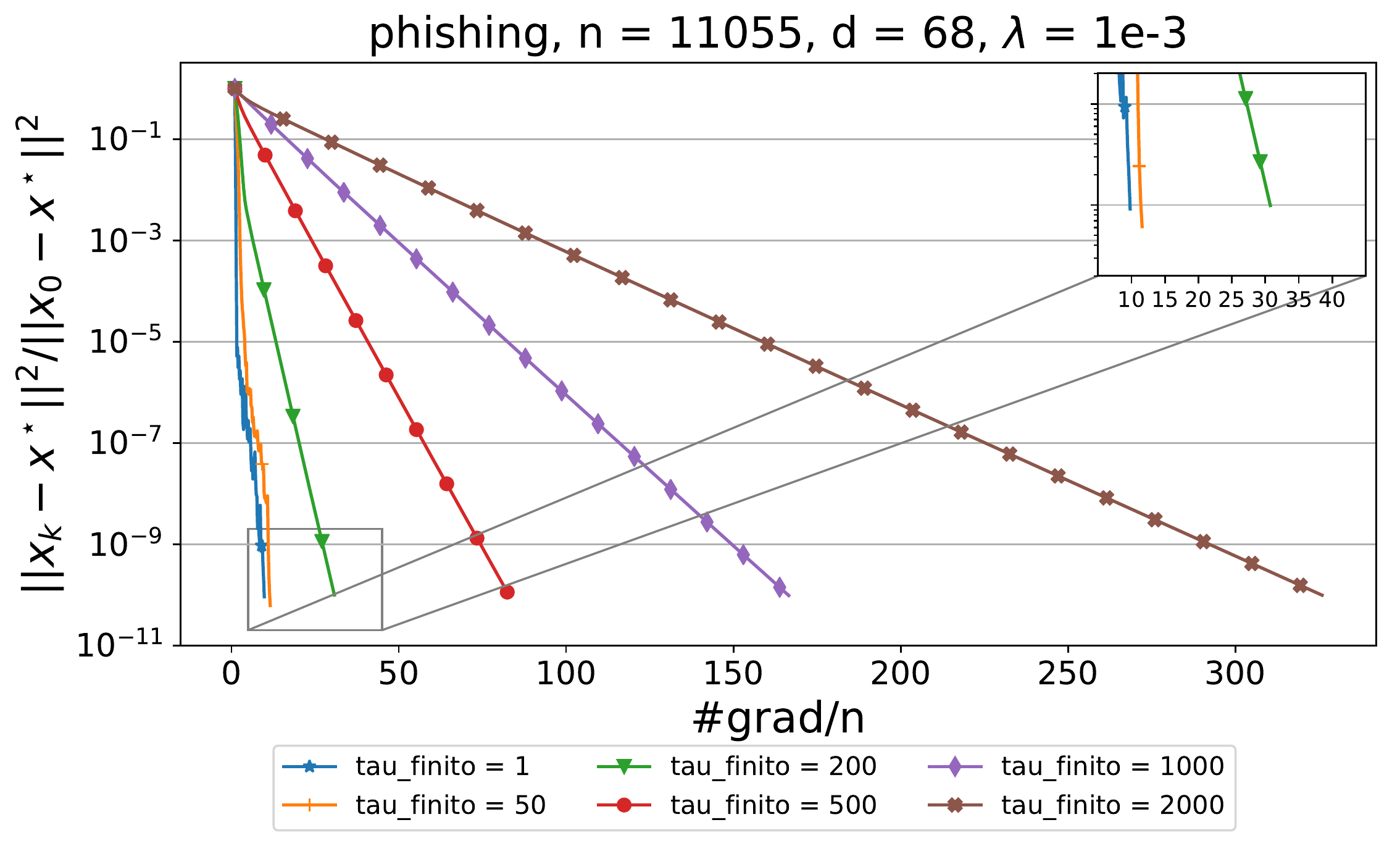}
	\end{minipage}
	\caption{The graphs show performance of the minibatch MISO on the real datasets: \texttt{ijcnn1}, \texttt{a1a}, \texttt{w3a}, \texttt{phishing}}
	\label{fig:real_minibatch}
\end{figure}

We run the minibatch MISO by choosing different batch size on several real datasets from the LIBSVM dataset \cite{LIBSVM}: \texttt{a1a}, \texttt{ijcnn1}, \texttt{w3a}, \texttt{phishing}. For every minibatch size, the stepsize $\gamma$ is chosen as given in Theorem \ref{Th:stronglyconvex}, i.e. $\gamma = \frac{n}{\tau {\cal L}}$. We run the algorithm until we get an accuracy $\frac{\| x^k - x^*\|^2}{\|x^0 - x^*\|^2} \leq 10^{-10}$, where $x^*$ is the optimum that we find by running the gradient descent algorithm on (\ref{eq:logistic_regression}), and $x^0$ is random initial point. As we have different minibatch sizes, the X-axis represents the total number of the single gradient computations divided by $n$ - for a minibatch size $\tau$ on each step the algorithm computes the single gradient $\tau$ times, given that we store the table of the gradients (Step 5-7 of the algorithm).
Figure \ref{fig:real_minibatch} shows results of the experiment. We can see that for not relatively big values of $\tau$ the number of gradients is almost the same, that means that if we run the table update in parallel, we will achieve linear speed-up in $\tau$. On the other hand, for relatively large values of $\tau$ (larger than ${\cal O}(\tfrac{L}{L_f})$), the linear speed-up can not be seen, which was expected by the theory.

\subsection{Minibatch MISO vs SAGA vs SVRG}  
\begin{figure}[!t]
	\centering
	\includegraphics[width = 0.8\linewidth]{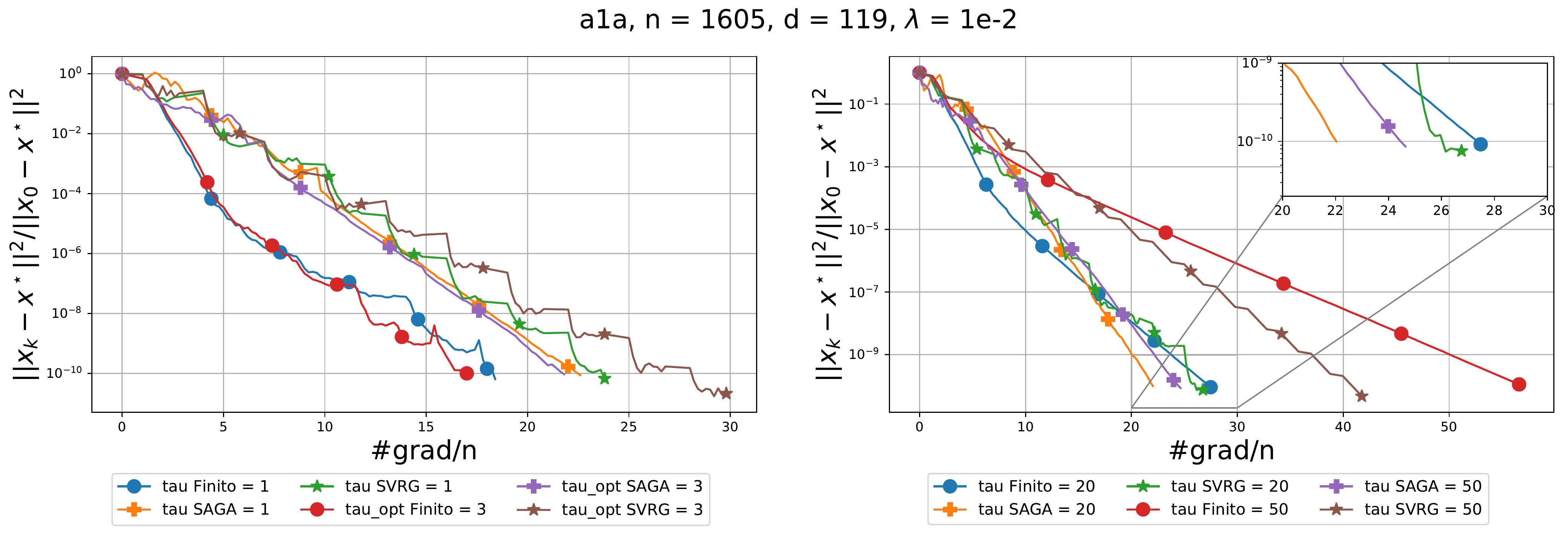}
	\includegraphics[width = 0.8\linewidth]{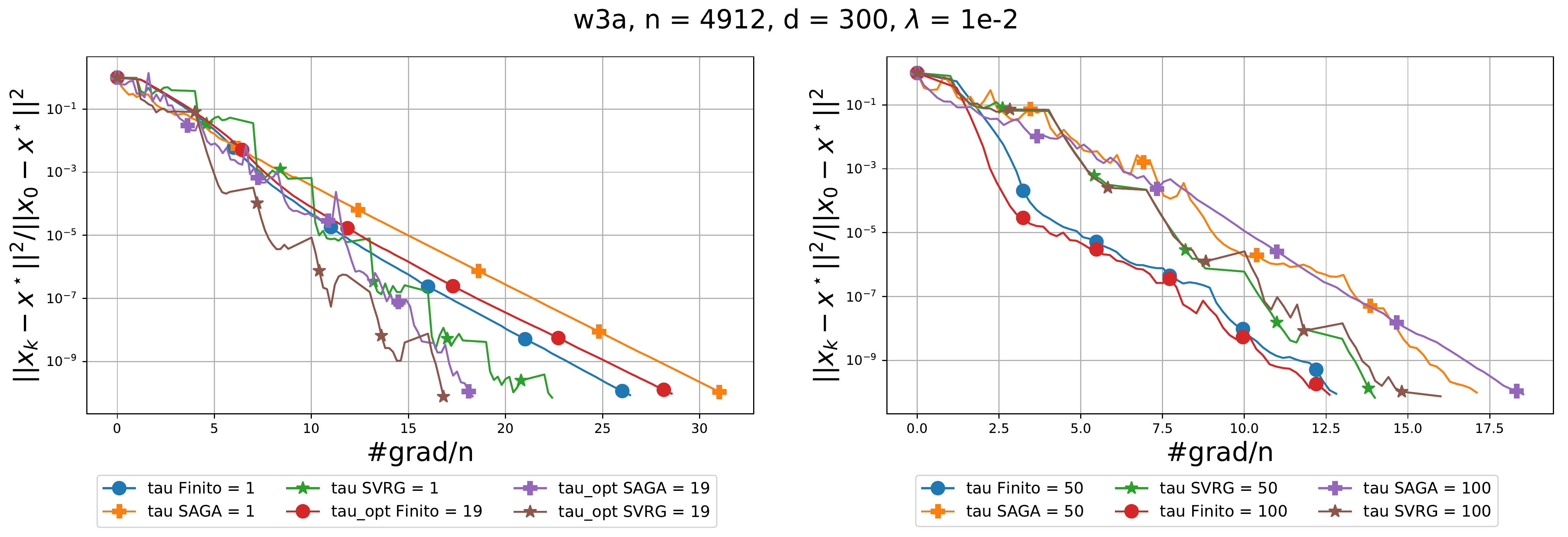}
	\caption{The convergence of the minibatch versions of MISO, SAGA and SVRG on the logistic regression problem for real datasets.}
	
		\label{fig:compare_algos1}
\end{figure}
  
In this part we compare the performances of the minibatch MISO, minibatch SAGA and minibatch SVRG. We run all algorithms on the same datasets from the LIBSVM dataset for some particular choices of $\tau$. The setting - the stopping criteria and the axis labels are the same as in the previous subsection. The important thing to take into account is the choice of the stepsize. Usually for experiments researchers run an algorithm a lot of times to find the optimal stepsize, however, in practice you are often able to run an algorithm only few times. So in our experiments, we choose the theoretical estimates for the stepsizes. To have some variation, we run the algorithms with its theoretical estimates multiplied by some factor, and then we choose the best ones. The factors we choose are $\{1, 5, 10, 20\} $. For the minibatch MISO algorithm the theoretical estimate of the stepsize is $\gamma_{\rm MISO} = \frac{n}{\tau {\cal L}}$. For the minibatch SAGA, the stepsize in theory is given by $\gamma_{\rm SAGA} = \frac{1}{4} \frac{1}{\max\{\cL + \lambda, \frac{1}{\tau} \frac{n-\tau}{n-1} L + \frac{\mu}{4} \frac{n}{\tau}\}}$ \cite{SAGA_stepsize}. For the minibatch SVRG, there is no existing theoretical estimates for parameters: the length of inner loop $m$ and the stepsize $\gamma_{\rm SVRG}$. Usually, in practice for SVRG people set $m=n$ or $m=2n$, and set the stepsize $\gamma_{\rm SVRG} = 0.1/L$ \cite{SVRG}. We tried to use the same setting for the minibatch SVRG, but the convergence was very slow for the minibatch size greater than 1. Then we tried other options, and figured out that the $\tau$ minibatch SVRG works much better for $m = [\frac{2n}{\tau}]$, similarly to increased probability of L-SVRG~\cite{horvath2019stochastic}. So in our experiments we use $m = [\frac{2n}{\tau}]$ and $\gamma_{\rm SVRG} = 0.1/L$. $tau\_opt$ in the figures are the optimal minibatch size for the minibatch SAGA given in \cite{SAGA_stepsize}. The results of our experiments are shown in Figure~\ref{fig:compare_algos1}  (also see Figure~\ref{fig:compare_algos2} in Appendix~\ref{sec:missing_fig}). Notice that the minibatch versions of the three algorithms behave in similar fashion. 
However, we can see that in the most of the experiments the minibatch MISO works better than both the minibatch SAGA and the minibatch SVRG.

\bibliographystyle{plain}
\bibliography{finito_ref.bib}

\begin{thebibliography}{10}

\bibitem{bietti2017stochastic}
Alberto Bietti and Julien Mairal.
\newblock Stochastic optimization with variance reduction for infinite datasets
  with finite sum structure.
\newblock In {\em Advances in Neural Information Processing Systems}, pages
  1623--1633, 2017.

\bibitem{LIBSVM}
Chih-Chung Chang and Chih-Jen Lin.
\newblock {LIBSVM}: A library for support vector machines.
\newblock {\em ACM Transactions on Intelligent Systems and Technology},
  2:27:1--27:27, 2011.
\newblock Software available at \url{http://www.csie.ntu.edu.tw/~cjlin/libsvm}.

\bibitem{SAGA}
Aaron Defazio, Francis Bach, and Simon Lacoste-Julien.
\newblock {SAGA}: A fast incremental gradient method with support for
  non-strongly convex composite objectives.
\newblock In {\em Advances in Neural Information Processing Systems 27}, 2014.

\bibitem{defazio2014finito}
Aaron Defazio, Tiberio Caetano, and Justin Domke.
\newblock Finito: A faster, permutable incremental gradient method for big data
  problems.
\newblock In {\em International Conference on Machine Learning}, pages
  1125--1133, 2014.

\bibitem{SAGA_stepsize}
Nidham {Gazagnadou}, Robert~M. {Gower}, and Joseph {Salmon}.
\newblock {Optimal mini-batch and step sizes for SAGA}.
\newblock {\em arXiv e-prints}, page arXiv:1902.00071, Jan 2019.

\bibitem{hofmann2015variance}
Thomas Hofmann, Aurelien Lucchi, Simon Lacoste-Julien, and Brian McWilliams.
\newblock Variance reduced stochastic gradient descent with neighbors.
\newblock In {\em Advances in Neural Information Processing Systems}, pages
  2305--2313, 2015.

\bibitem{horvath2019stochastic}
Samuel Horv{\'a}th, Dmitry Kovalev, Konstantin Mishchenko, Sebastian Stich, and
  Peter Richt{\'a}rik.
\newblock Stochastic distributed learning with gradient quantization and
  variance reduction.
\newblock {\em arXiv preprint arXiv:1904.05115}, 2019.

\bibitem{SVRG}
Rie Johnson and Tong Zhang.
\newblock Accelerating stochastic gradient descent using predictive variance
  reduction.
\newblock In {\em Advances in Neural Information Processing Systems 26}, pages
  315--323, 2013.

\bibitem{Lange-MM-book}
Lange Kenneth.
\newblock {\em {MM} optimization algorithms}.
\newblock SIAM, 2016.

\bibitem{S2GD}
Jakub Kone\v{c}n\'{y} and Peter Richt\'{a}rik.
\newblock Semi-stochastic gradient descent methods.
\newblock {\em Frontiers in Applied Mathematics and Statistics}, pages 1--14,
  2017.

\bibitem{L-SVRG}
Dmitry Kovalev, Samuel Horv\'{a}th, and Peter Richt\'{a}rik.
\newblock Don’t jump through hoops and remove those loops: {SVRG} and
  {K}atyusha are better without the outer loop.
\newblock {\em arXiv preprint arXiv:1901.08689}, 2019.

\bibitem{lei2016less}
Lihua Lei and Michael~I Jordan.
\newblock Less than a single pass: Stochastically controlled stochastic
  gradient method.
\newblock {\em arXiv preprint arXiv:1609.03261}, 2016.

\bibitem{lei2017non}
Lihua Lei, Cheng Ju, Jianbo Chen, and Michael~I Jordan.
\newblock Non-convex finite-sum optimization via scsg methods.
\newblock In {\em Advances in Neural Information Processing Systems}, pages
  2348--2358, 2017.

\bibitem{lin2015universal}
Hongzhou Lin, Julien Mairal, and Zaid Harchaoui.
\newblock A universal catalyst for first-order optimization.
\newblock In {\em Advances in Neural Information Processing Systems}, pages
  3384--3392, 2015.

\bibitem{mairal2013optimization}
Julien Mairal.
\newblock Optimization with first-order surrogate functions.
\newblock In {\em International Conference on Machine Learning}, pages
  783--791, 2013.

\bibitem{mishchenko2018distributed}
Konstantin Mishchenko, Franck Iutzeler, and J{\'e}r{\^o}me Malick.
\newblock A distributed flexible delay-tolerant proximal gradient algorithm.
\newblock {\em arXiv preprint arXiv:1806.09429}, 2018.

\bibitem{mishchenko2018delay}
Konstantin Mishchenko, Franck Iutzeler, J{\'e}r{\^o}me Malick, and Massih-Reza
  Amini.
\newblock A delay-tolerant proximal-gradient algorithm for distributed
  learning.
\newblock In {\em International Conference on Machine Learning}, pages
  3584--3592, 2018.

\bibitem{mokhtari2018iqn}
Aryan Mokhtari, Mark Eisen, and Alejandro Ribeiro.
\newblock {IQN}: An incremental quasi-{N}ewton method with local superlinear
  convergence rate.
\newblock {\em SIAM Journal on Optimization}, 28(2):1670--1698, 2018.

\bibitem{mokhtari2018surpassing}
Aryan Mokhtari, Mert G{\"u}rb{\"u}zbalaban, and Alejandro Ribeiro.
\newblock Surpassing gradient descent provably: A cyclic incremental method
  with linear convergence rate.
\newblock {\em SIAM Journal on Optimization}, 28(2):1420--1447, 2018.

\bibitem{NesterovBook}
Yurii Nesterov.
\newblock {\em Introductory lectures on convex optimization: a basic course
  (Applied Optimization)}.
\newblock Kluwer Academic Publishers, 2004.

\bibitem{raj2018svrg}
Anant Raj and Sebastian~U Stich.
\newblock {SVRG} meets {SAGA}: k-{SVRG}---a tale of limited memory.
\newblock {\em arXiv preprint arXiv:1805.00982}, 2018.

\bibitem{Reddi2016}
Sashank~J. Reddi, Ahmed Hefny, Suvrit Sra, Barnabas Poczos, and Alex Smola.
\newblock Stochastic variance reduction for nonconvex optimization.
\newblock In {\em The 33th International Conference on Machine Learning}, pages
  314--323, 2016.

\bibitem{PCDM}
Peter Richt\'{a}rik and Martin Tak\'{a}\v{c}.
\newblock Parallel coordinate descent methods for big data optimization.
\newblock {\em Mathematical Programming}, 156(1-2):433--484, 2016.

\bibitem{ryu2017proximal}
Ernest~K Ryu and Wotao Yin.
\newblock Proximal-proximal-gradient method.
\newblock {\em arXiv preprint arXiv:1708.06908}, 2017.

\bibitem{shalev2015sdca}
Shai Shalev-Shwartz.
\newblock {SDCA} without duality.
\newblock {\em arXiv preprint arXiv:1502.06177}, 2015.

\bibitem{shalev2016sdca}
Shai Shalev-Shwartz.
\newblock {SDCA} without duality, regularization, and individual convexity.
\newblock In {\em International Conference on Machine Learning}, pages
  747--754, 2016.

\bibitem{SDCA}
Shai Shalev-Shwartz and Tong Zhang.
\newblock Stochastic dual coordinate ascent methods for regularized loss.
\newblock {\em Journal of Machine Learning Research}, 14(1):567--599, 2013.

\bibitem{shang2018vr}
Fanhua Shang, Kaiwen Zhou, Hongying Liu, James Cheng, Ivor Tsang, Lijun Zhang,
  Dacheng Tao, and Jiao Licheng.
\newblock {VR-SGD}: A simple stochastic variance reduction method for machine
  learning.
\newblock {\em IEEE Transactions on Knowledge and Data Engineering}, 2018.

\bibitem{zhou2018simple}
Kaiwen Zhou, Fanhua Shang, and James Cheng.
\newblock A simple stochastic variance reduced algorithm with fast convergence
  rates.
\newblock In {\em International Conference on Machine Learning}, pages
  5975--5984, 2018.

\end{thebibliography}

\clearpage
\appendix
\part*{Appendix}

\tableofcontents

\clearpage

\section{Figure~\ref{fig:compare_algos2}} \label{sec:missing_fig}

Here we provide a figure which we referred to in Section~\ref{sec:exp}, but which we did not include in the main paper due to space restrictions.

\begin{figure}[ht]
	\centering
	\includegraphics[width = \linewidth]{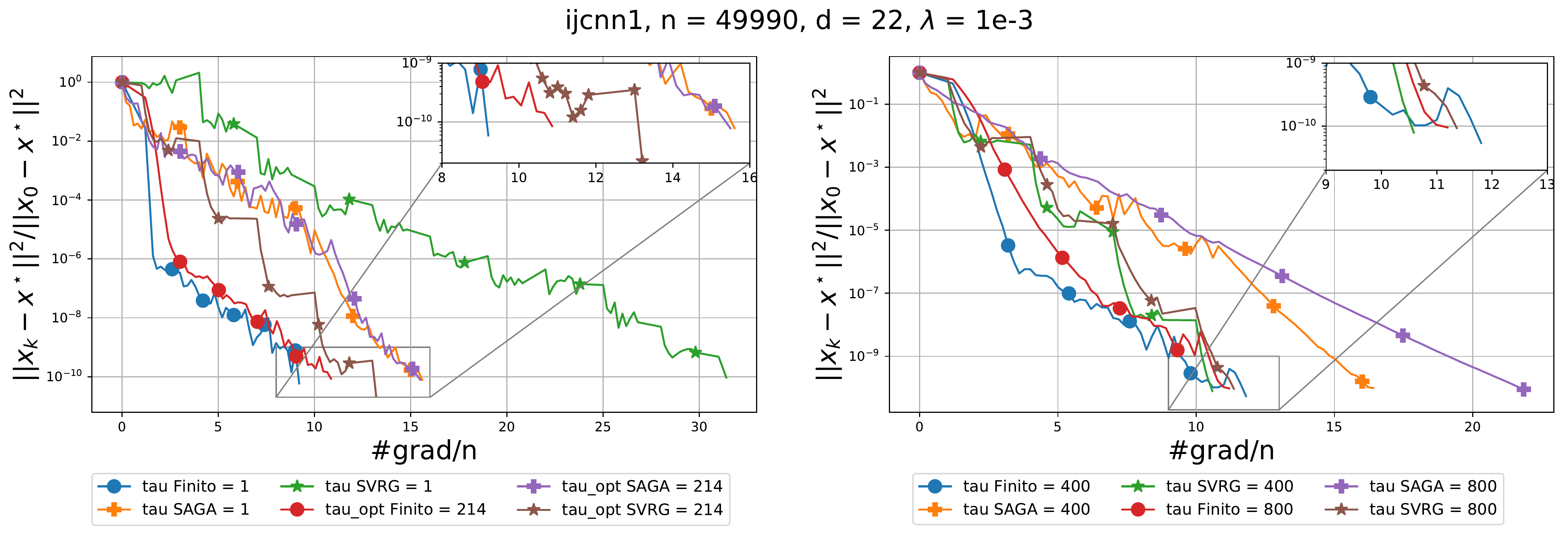}
	\includegraphics[width = \linewidth]{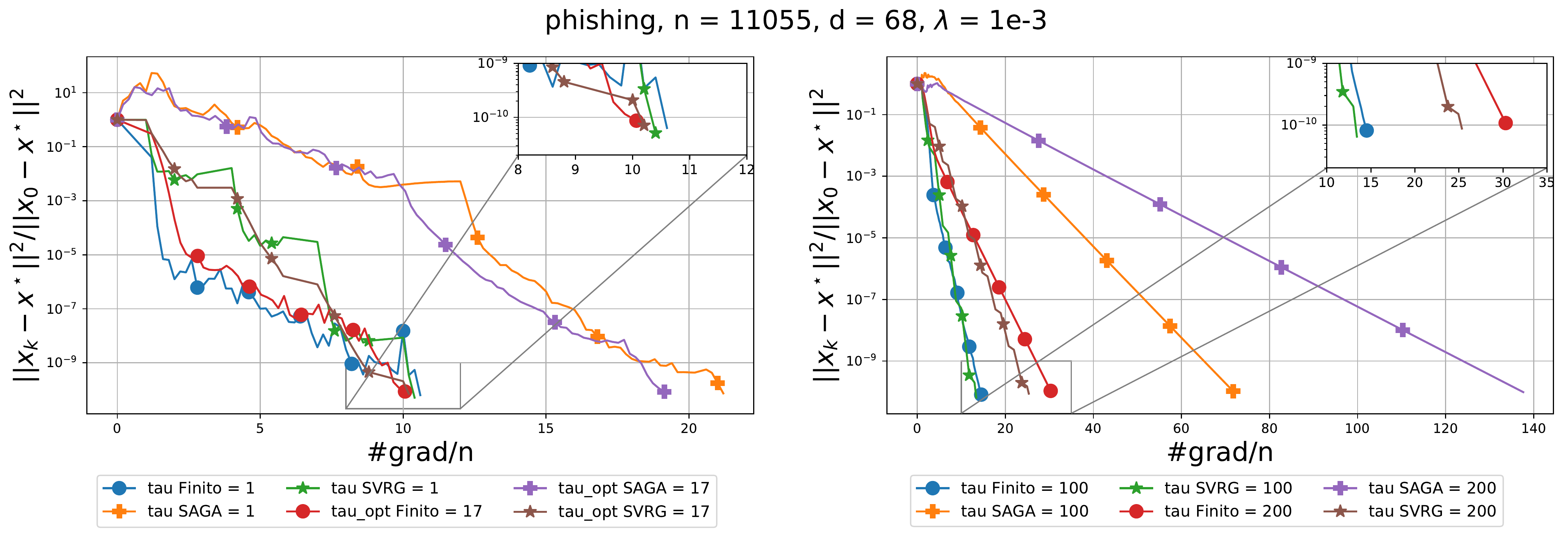}
	\caption{The convergence of the minibatch versions of MISO, SAGA and SVRG on the logistic regression problem for real datasets.}
	\label{fig:compare_algos2}
\end{figure}

\clearpage

\section{Notation Table} \label{sec:notation_appendix}

 \begin{longtable}{| p{.20\textwidth} | p{.80\textwidth}| } 
  \caption{Summary of notation used in this paper.}  \label{tbl:notation}\\
  \hline
  \multicolumn{2}{|l|}{\bf Optimization} \\
   \hline
   $f$ & objective function $f(x)=\frac{1}{n}\sum \limits_{i=1}^n f_i(x)$ \\
   $x^*$ & optimal point \\
   $L$ & Lipschitz constant of $f^{\prime}_i$ \\
   $L_f$ & Lipschitz constant of $f^{\prime}$ \\
   $\mu$ & strong convexity constant of $f$\\
     \hline
   \multicolumn{2}{|l|}{\bf Algorithm} \\  
     \hline     
   $\tau$ & minibatch size  \\ 
   $\gamma$ & step size \\    
   $S, S^k$ & a random subset of $\{1,2,\dots,n\}$ \\
   $x^k$ & $k$th iterate \\
   $x^a$ & a vector chosen uniformly at random from the set of iterates $\{x^0,x^1,\dots,x^k\}$ \\
   $\phi_i^{k}$ & auxiliary variables maintained by the algorithm, $i=1,2,\dots,n$ \\
   $\bar \phi^{k}$ & $\frac{1}{n}\sum \limits_{i=1}^n \phi_i^{k}$ \\
     \hline
   \multicolumn{2}{|l|}{\bf Analysis} \\  
     \hline        
      ${\cal W}^k$ & $ \sum \limits_{i=1}^n\|\phi_i^{k} - x^* - \gamma f^{\prime}_i(\phi_i^{k}) + \gamma f^{\prime}_i(x^*)\|^2$ \\
   $\Psi^k_p$ & Lyapunov function (convex case) $\|x^{k} - x^*\|^2 + \frac{\cA \tau(2+p)}{n^3}{\cal W}^{k}$  \\
   $\Psi^k$& Lyapunov function (nonconvex case)  $ f(x^k) + \alpha \cdot \frac{1}{n} \sum \limits_{i=1}^n \|x^k - \phi^k_i\|^2$ \\
     \hline
  
 \end{longtable}

\section{Basic Facts and Simple Results}

\begin{lemma}
	If $f_i$ is convex and $L$-smooth and $f$ is convex and $L_f$-smooth, then $\forall x\in \R^d$, 
	\begin{equation}\label{eq:sumfi}
	\sum_{i=1}^n\|f^{\prime}_i(x) - f^{\prime}_i(x^*) \|^2 \leq 2nL (f(x) - f(x^*) ), 
	\end{equation}
	and 
	\begin{equation}\label{eq:Lf}
	\|f^{\prime}(x) \|^2 \leq 2L_f (f(x) - f(x^*)). 
	\end{equation}
\end{lemma}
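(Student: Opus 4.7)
The plan is to derive both inequalities from the standard co-coercivity/descent inequality that holds for any convex and $L$-smooth function $g$, namely
\[
\frac{1}{2L}\|g'(x) - g'(y)\|^2 \;\leq\; g(x) - g(y) - \langle g'(y),\, x - y\rangle,
\]
valid for all $x,y\in\R^d$. This is a textbook consequence of $L$-smoothness combined with convexity (equivalent to $g'$ being $\tfrac{1}{L}$-cocoercive), so I would simply cite it as a known fact.

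For the first inequality, I would apply this bound to each $g = f_i$ with $y = x^*$, giving
\[
\frac{1}{2L}\|f'_i(x) - f'_i(x^*)\|^2 \;\leq\; f_i(x) - f_i(x^*) - \langle f'_i(x^*),\, x - x^*\rangle,
\]
and then sum over $i = 1, \dots, n$. The summed left-hand side is exactly $\tfrac{1}{2L}\sum_i \|f'_i(x) - f'_i(x^*)\|^2$. On the right-hand side, $\sum_i (f_i(x) - f_i(x^*)) = n(f(x) - f(x^*))$ by definition of $f$, and the cross term vanishes because $x^*\in\cX^*$ implies $\sum_i f'_i(x^*) = n\, f'(x^*) = 0$. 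Multiplying through by $2L$ yields \eqref{eq:sumfi}.

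For the second inequality, I would apply the same cocoercivity bound directly to $g = f$ (which is convex and $L_f$-smooth by assumption) with $y = x^*$. Since $f'(x^*) = 0$, both the inner-product term and one copy of $f'(x^*)$ on the left drop out, leaving
\[
\frac{1}{2L_f}\|f'(x)\|^2 \;\leq\; f(x) - f(x^*),
\]
which is precisely \eqref{eq:Lf} after rearrangement.

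There is no genuine obstacle here: the entire argument rests on the standard cocoercivity inequality, the identity $\sum_i f'_i(x^*) = 0$ at an optimum, and the definition $f = \tfrac{1}{n}\sum_i f_i$. If anything, the only care needed is to make clear that the first bound uses the per-component smoothness constant $L$ (so the sum of squared gradient differences scales with $nL$), whereas the second bound uses the smaller constant $L_f$ governing the average $f$ itself.
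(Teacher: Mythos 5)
Your proof is correct and follows essentially the same route as the paper: apply the standard cocoercivity inequality $\|g'(x)-g'(y)\|^2 \le 2L_g(g(x)-g(y)-\langle g'(y),x-y\rangle)$ to each $f_i$ (summing and using $f'(x^*)=0$) and to $f$ itself with $y=x^*$. The only cosmetic difference is that you cancel the cross term via $\sum_i f_i'(x^*)=n f'(x^*)=0$ after summing, which is exactly what the paper does when it notes $\nabla f(x^*)=0$.
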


\begin{proof}
	
	Since $f$ is convex and $L_f$ smooth, we have 
	$$
	\|f^{\prime}(x) -f^{\prime}(y)\|^2 \leq 2L_f(f(x) - f(y) - \langle f^{\prime}(y), x - y \rangle). 
	$$
	By choosing $y=x^*$, and noticing $f^{\prime}(x^*) =0$, we can get (\ref{eq:Lf}). 
	
	\vskip 2mm 
	
	Since $f_i$ is convex and $L$-smooth, we have 
	$$
	\|f^{\prime}_i(x) - f^{\prime}_i(y)\|^2 \leq 2L(f_i(x) - f_i(y) - \langle  f^{\prime}_i(y), x-y \rangle )
	$$
	which implies that 
	$$
	\sum_{i=1}^n \|f^{\prime}_i(x) - f^{\prime}_i(y)\|^2 \leq 2nL (f(x) - f(y) - \langle f^{\prime}(y), x-y \rangle )  
	$$
	By choosing $y=x^*$, and noticing $\nabla f(x^*) =0$, we can get (\ref{eq:sumfi}). 
	
\end{proof}

\section{Proofs: Section~\ref{sec:mini} }

\subsection{Proof of Lemma~\ref{lm:CDexist}}

We will show that for all vectors $a_1$, ..., $a_n \in \R^d$, we have 
$$
\mathbb{E}_{S \sim \cP} \left[  \left\| \sum_{i \in S} \frac{a_i}{p_i} \right\|^2  \right]  \leq \max_{1\leq i \leq n} \frac{\mathbb{E}^i[|S|]}{p_i} \sum_{i=1}^n \|a_i\|^2,
$$
which means that inequality \eqref{eq:sampling_asumption} holds with $\cA =\max_{1\leq i \leq n} \frac{\mathbb{E}^i[|S|]}{p_i} $ and $\cB =0$. Indeed, by the convexity of $\|\cdot\|^2$, we have 
\begin{eqnarray*}
\mathbb{E}_{S \sim \cP} \left[  \left\| \sum_{i \in S} \frac{a_i}{p_i} \right\|^2  \right]  &\leq& \mathbb{E}_{S \sim \cP} \left[ |S| \sum_{i\in S} \left\|\frac{a_i}{p_i} \right\|^2  \right] \\
&=& \sum_{C} p_C |C| \sum_{i\in C} \frac{1}{p_i^2} \|a_i\|^2 \\
&=& \sum_{i=1}^n \frac{1}{p_i} \sum_{C: i\in C} \frac{p_C}{p_i}|C| \|a_i\|^2 \\ 
&=& \sum_{i=1}^n \frac{\mathbb{E}^i[|S|]}{p_i} \|a_i\|^2 \\
&\leq& \max_{1\leq i \leq n} \frac{\mathbb{E}^i[|S|]}{p_i} \sum_{i=1}^n \|a_i\|^2. 
\end{eqnarray*}

\subsection{Proof of Lemma~\ref{lm:tauniceCD}}

	Since ${\bf P}_{ij} = \frac{\tau(\tau-1)}{n(n-1)}$ for $\tau$-nice sampling, we have 
	\begin{eqnarray*}
		\mathbb{E}\left[ \left\|\sum_{i\in S} \frac{a_i}{p_i}\right\|^2 \right] &=& \frac{n^2}{\tau^2}\mathbb{E}\left[ \sum_{i, j\in S} \langle a_i, a_j\rangle \right] \\
		&=& \frac{n^2}{\tau^2}\sum_{C} p_C \sum_{i, j\in S} \langle a_i, a_j\rangle \\ 
		&=& \frac{n^2}{\tau^2} \sum_{i, j=1}^n \sum_{C: i, j \in C}p_C \langle a_i, a_j\rangle \\
		& = & \frac{n^2}{\tau^2} \sum_{i, j=1}^n {\bf P}_{ij} \langle a_i, a_j\rangle \\ 
		&=& \frac{n(\tau-1)}{\tau(n-1)} \sum_{i\neq j} \langle a_i, a_j\rangle + \frac{n}{\tau}\sum_{i=1}^n\|a_i\|^2 \\ 
		&=&  \frac{n(\tau-1)}{\tau(n-1)} \sum_{i, j=1}^n \langle a_i, a_j\rangle + \frac{n(n-\tau)}{\tau(n-1)} \sum_{i=1}^n\|a_i\|^2 \\ 
		&=& \frac{n(n-\tau)}{\tau(n-1)} \sum_{i=1}^n\|a_i\|^2 + \frac{n(\tau-1)}{\tau(n-1)} \left\|\sum_{i=1}^na_i\right\|^2. 
	\end{eqnarray*}

\clearpage
\section{Proofs: Section~\ref{sec:ALG}}

\subsection{Proof of Lemma~\ref{lm:???}}

For $U\subseteq [n] = \{1,2,\dots,n\}$, and $S\sim \cP$, let $p_U \eqdef \mathbb{P}[S = U]$. 	Since 
	\begin{equation}\label{eq:xk+1-k}
	x^{k+1} - x^k = \frac{1}{n}\sum_{i\in S^k}(x^k - \phi_i^k - \gamma f^{\prime}_i (x^k) + \gamma f^{\prime}_i(\phi_i^k) ),
	\end{equation}
	we have 
	\begin{eqnarray*}
		\mathbb{E}_k[x^{k+1} - x^k] &=& \frac{1}{n}\sum_{U\subseteq [n]} p_U \sum_{i\in U}(x^k - \phi_i^k - \gamma f^{\prime}_i (x^k) + \gamma f^{\prime}_i(\phi_i^k) )\\ 
		&=& \frac{1}{n} \sum_{i=1}^n \sum_{U: i\in U} p_U (x^k - \phi_i^k - \gamma f^{\prime}_i (x^k) + \gamma f^{\prime}_i(\phi_i^k) ) \\ 
		&=& \frac{1}{n} \sum_{i=1}^n p_i(x^k - \phi_i^k - \gamma f^{\prime}_i (x^k) + \gamma f^{\prime}_i(\phi_i^k) ) \\ 
		&=& \frac{1}{n}\left(\tau x^k - \tau {\bar \phi}^k - \gamma \tau f^{\prime}(x^k) + \gamma \tau \frac{1}{n}\sum_if^{\prime}_i(\phi^k_i)\right) \\
		&=& -\frac{\gamma\tau}{n} f^{\prime}(x^k) \;.
	\end{eqnarray*}

\section{Proofs: Section~\ref{sec:convex_and_str_convex}}

\subsection{Proof of Lemma~\ref{lm:Dk+1}}

	\begin{eqnarray*}
		\mathbb{E}_k[{\cal W}^{k+1}] 
		&=& \sum_{i=1}^n \mathbb{E}_k[\|\phi_i^{k+1} - x^* - \gamma f^{\prime}_i(\phi_i^{k+1}) + \gamma f^{\prime}_i(x^*)\|^2] \\ 
		&=& \sum_{i=1}^n\left(1-\frac{\tau}{n}\right)\|\phi_i^{k} - x^* - \gamma f^{\prime}_i(\phi_i^{k}) + \gamma f^{\prime}_i(x^*)\|^2 + \sum_{i=1}^n \frac{\tau}{n} \|x^k -x^* - \gamma f^{\prime}_i(x^k) + \gamma f^{\prime}_i(x^*) \|^2 \\ 
		&=& \left(1-\frac{\tau}{n}\right){\cal W}^k + \frac{\tau}{n} \left(  n\|x-x^*\|^2 + \gamma^2 \sum_{i=1}^n \|f^{\prime}_i(x^k) - f^{\prime}_i(x^*) \|^2 - 2n\gamma \langle x^k -x^*, f^{\prime}(x^k) - f^{\prime}(x^*) \rangle \right) \\ 
		&\leq& \left(1-\frac{\tau}{n}\right){\cal W}^k + \tau\|x^k - x^*\|^2 + \frac{\tau\gamma^2}{n}\sum_{i=1}^n \|f^{\prime}_i(x^k) - f^{\prime}_i(x^*) \|^2\\ 
		&\overset{(\ref{eq:sumfi})}{\leq}& \left(1-\frac{\tau}{n}\right){\cal W}^k + \tau\|x^k - x^*\|^2 + 2L\tau\gamma^2(f(x^k) - f(x^*)) \;. 
	\end{eqnarray*}

\subsection{Proof of Lemma~\ref{Lem:key}}

	\begin{eqnarray*}
		\mathbb{E}_k[\|x^{k+1} - x^*\|^2] &=& \|x^k-x^*\|^2 + 2\mathbb{E}_k[\langle x^{k+1}- x^k, x^k-x^* \rangle] + \mathbb{E}_k[\|x^{k+1} - x^k\|^2] \\ 
		&\overset{(\ref{eq:xiugd97g9f})}{=}&  \|x^k-x^*\|^2 - \frac{2\gamma\tau}{n}\langle f^{\prime}(x^k), x^k-x^* \rangle +  \mathbb{E}_k[\|x^{k+1} - x^k\|^2] \\ 
		&\leq& \left(1-\frac{\tau \gamma \mu}{n}\right)\|x^k -x^*\|^2 - \frac{2\gamma\tau}{n}(f(x^k) - f(x^*)) + \mathbb{E}_k[\|x^{k+1} - x^k\|^2] \;.
	\end{eqnarray*}
	
	From Assumption \ref{as:S} and the fact that $p_i= \frac{\tau}{n}$, we have 
	\begin{eqnarray*}
		\mathbb{E}_k[\|x^{k+1} - x^k\|^2] &\overset{(\ref{eq:xk+1-k})}{=}& \frac{1}{n^2} \mathbb{E}_k \left[ \frac{\tau^2}{n^2}\left\| \sum_{i\in S^k}\frac{1}{p_i}(x^k - \phi_i^k - \gamma f^{\prime}_i (x^k) + \gamma f^{\prime}_i(\phi_i^k) ) \right\|^2 \right] \\ 
		&\leq& \frac{\tau^2\cA }{n^4} \sum_{i=1}^n \|x^k - \phi_i^k - \gamma f^{\prime}_i (x^k) + \gamma f^{\prime}_i(\phi_i^k) \|^2 + \frac{\tau^2 \cB}{n^4} \left\| \sum_{i=1}^n (x^k - \phi_i^k - \gamma f^{\prime}_i (x^k) + \gamma f^{\prime}_i(\phi_i^k) ) \right\|^2 \\ 
		&=& \frac{\tau^2 \cA}{n^4} \sum_{i=1}^n \|x^k - \phi_i^k - \gamma f^{\prime}_i (x^k) + \gamma f^{\prime}_i(\phi_i^k) \|^2 + \frac{\tau^2 \cB}{n^2}\gamma^2 \|f^{\prime}(x^k)\|^2 \\ 
		&\overset{(\ref{eq:Lf})}{\leq}& \frac{\tau^2 \cA}{n^4} \sum_{i=1}^n \|x^k - \phi_i^k - \gamma f^{\prime}_i (x^k) + \gamma f^{\prime}_i(\phi_i^k) \|^2 + \frac{2\tau^2 \cB L_f}{n^2}\gamma^2 (f(x^k) - f(x^*)). 
	\end{eqnarray*}
	
	For the first term in the above last inequality, we have 
	\begin{eqnarray*}
		&&\sum_{i=1}^n \|x^k - \phi_i^k - \gamma f^{\prime}_i (x^k) + \gamma f^{\prime}_i(\phi_i^k) \|^2 \\
		&\leq& 2\sum_{i=1}^n \left(  \|x^k - x^* - (\phi^k_i - x^* - \gamma f^{\prime}_i(\phi^k_i) + \gamma f^{\prime}_i(x^*))\|^2+ \gamma^2\| f^{\prime}_i(x^k) - f^{\prime}_i(x^*)\|^2 \right) \\ 
		&=& 2\sum_{i=1}^n \|\phi^k_i - x^* - \gamma f^{\prime}_i(\phi^k_i) + \gamma f^{\prime}_i(x^*)\|^2 - 2n\|x^k - x^*\|^2 + 2\gamma^2 \sum_{i=1}^n \| f^{\prime}_i(x^k) - f^{\prime}_i(x^*)\|^2 \\ 
		&=& 2{\cal W}^k - 2n\|x^k - x^*\|^2 + 2\gamma^2 \sum_{i=1}^n \| f^{\prime}_i(x^k) - f^{\prime}_i(x^*)\|^2 \\ 
		&\overset{(\ref{eq:sumfi})}{\leq}&  2{\cal W}^k - 2n\|x^k - x^*\|^2 + 4nL\gamma^2 (f(x^k) - f(x^*)). 
	\end{eqnarray*}
	
	Combining all the above results and Lemma \ref{lm:Dk+1}, we can obtain 
	\begin{eqnarray*}
		&& \mathbb{E}_k\left[ \|x^{k+1} - x^*\|^2 + \frac{\cA \tau(2+p)}{n^3}{\cal W}^{k+1}  \right] \\ 
		&\leq& \left(1 - \frac{\tau \gamma \mu}{n} - \frac{2\cA\tau^2}{n^3} + \frac{\cA\tau^2(2+p)}{n^3} \right) \|x^k -x^*\|^2 \\ 
		&& - \left(  \frac{2\gamma\tau}{n} - \frac{2\cB L_f\tau^2\gamma^2}{n^2} - \frac{4\cA L\tau^2\gamma^2}{n^3} - \frac{2(2+p)\cA L\tau^2\gamma^2}{n^3}  \right)(f(x^k) - f(x^*)) \\
		&& + \left(  \frac{2\cA \tau^2}{n^4} + \left(1-\frac{\tau}{n} \right)\frac{\cA \tau(2+p)}{n^3}  \right){\cal W}^k \\ 
		&=&\left(  1 - \frac{\tau}{n} \left(\gamma \mu - \frac{\cA \tau p}{n^2} \right) \right)
		\|x^k -x^*\|^2 + \frac{\cA\tau (2+p)}{n^3} \left(  1 - \frac{\tau}{n}\cdot \frac{p}{2+p}  \right) {\cal W}^k  \\ 
		&& - \frac{2\gamma\tau}{n}\left( 1 - \frac{\tau\gamma}{n}\left(\cB L_f + \frac{(4+p)\cA L}{n} \right)   \right) (f(x^k) - f(x^*)). 
	\end{eqnarray*}

\subsection{Proof of Theorem~\ref{Th:stronglyconvex}}

	If $0\leq p \leq 2$, then $\gamma = \frac{n}{\tau {\cal L}}$ implies that 
	$$
	\frac{\tau\gamma}{n}\left(\cB L_f + \frac{(4+p)\cA L}{n}\right) = \frac{\cB L_f + (4+p)\cA L/n}{{\cal L}} \leq 1 \;.
	$$
	Hence, by Lemma~\ref{Lem:key}, we have 
	\begin{equation}\label{eq:Lya}
	\mathbb{E}_k\left[ \Psi^{k+1}_p  \right] \leq \left(  1 - \frac{\tau}{n} \left(\gamma \mu - \frac{\cA\tau p}{n^2} \right) \right)\|x^k -x^*\|^2 + \frac{\cA \tau (2+p)}{n^3} \left(  1 - \frac{\tau}{n}\cdot \frac{p}{2+p}  \right) {\cal W}^k \;.
	\end{equation}
	
	We discuss two cases. 
	
	\vskip 2mm
	
	{\bf Case 1.} Suppose $n^3 \geq \frac{4\cA \tau^2{\cal L}}{\mu}$. In this case, we choose $p=2$. First, we have 
	$$
	\frac{2\cA \tau/n^2}{\gamma \mu /2} = \frac{2\cA \tau}{n^2}\cdot \frac{2}{\gamma \mu} = \frac{4\cA \tau^2{\cal L}}{n^3 \mu} \leq 1, 
	$$
	which indicates 
	$$
	\frac{\tau}{n} (\gamma \mu - \frac{\cA \tau p}{n^2}) \geq \frac{\tau \gamma \mu}{2n} = \frac{\mu}{2{\cal L}}. 
	$$
	Furthermore, 
	$$
	\frac{\tau}{n}\cdot \frac{p}{2+p} = \frac{\tau}{2n}. 
	$$
	Therefore, by (\ref{eq:Lya}), we have 
	$$
	\mathbb{E}_k[\Psi^{k+1}_p] \leq \left( 1 - \min\left\{ \frac{\tau}{2n}, \frac{\mu}{2{\cal L}}  \right\}  \right) \Psi^k_p. 
	$$
	
	Noticing $p=2$ in this case, we have 
	$$
	\mathbb{E}[\|x^k -x^*\|^2] \leq \mathbb{E}[\Psi^k_p] \leq \left( 1 - \min\left\{ \frac{\tau}{2n}, \frac{\mu}{2{\cal L}}  \right\}  \right)^k \left( \|x^0-x^*\|^2 + \frac{4\cA \tau}{n^3}{\cal W}^0 \right). 
	$$
	
	{\bf Case 2.} Suppose $n^3 < \frac{4\cA \tau^2{\cal L}}{\mu}$. In this case, we choose $p = \frac{\gamma \mu n^2}{2\cA \tau}$. First, we have 
	$$
	p = \frac{\gamma \mu n^2}{2\cA \tau} = \frac{n^3\mu}{2\cA \tau^2{\cal L}} < 2, 
	$$
	and 
	$$
	\frac{\tau}{n} (\gamma \mu - \frac{\cA \tau p}{n^2}) = \frac{\tau \gamma \mu}{2n} = \frac{\mu}{2{\cal L}}. 
	$$
	Moreover, 
	$$
	\frac{\tau}{n}\cdot \frac{p}{2+p} = \frac{\tau}{n} \cdot \frac{n^3\mu}{n^3\mu + 4\cA \tau^2{\cal L}} \geq \frac{\tau \mu n^2}{8\cA \tau^2{\cal L}} = \frac{\mu n^2}{8\cA \tau {\cal L}}. 
	$$
	Hence, by (\ref{eq:Lya}), we have 
	$$
	\mathbb{E}_k[\Psi^{k+1}_p] \leq \left( 1 - \min\left\{ \frac{\mu}{2{\cal L}}, \frac{\mu n^2}{8\cA \tau {\cal L}}  \right\}  \right) \Psi^k_p. 
	$$
	Noticing $p<2$ in this case, we have 
	$$
	\mathbb{E}[\|x^k -x^*\|^2] \leq \mathbb{E}[\Psi^k_p] \leq  \left( 1 - \min\left\{ \frac{\mu}{2{\cal L}}, \frac{\mu n^2}{8\cA \tau {\cal L}}  \right\}  \right)^k \left( \|x^0-x^*\|^2 + \frac{4\cA \tau}{n^3}{\cal W}^0 \right). 
	$$

\subsection{Proof of Corollary~\ref{co:stronglyconvexns}}

From Lemma \ref{lm:tauniceCD}, we can choose $\cA = \frac{n(n-\tau)}{\tau(n-1)}$ and $\cB =1$ for $\tau$-nice sampling. Then for $n\geq 4$, we have 
$$
\frac{\mu n^2}{8\cA \tau {\cal L}} = \frac{\mu n}{8{\cal L}}\cdot \frac{n-1}{n-\tau} \geq \frac{\mu}{2{\cal L}}, 
$$
where ${\cal L} = L_f + \frac{6L}{\tau}\cdot \frac{n-\tau}{n-1}$. Hence, from Theorem \ref{Th:stronglyconvex}, by choosing $\gamma = \frac{n}{\tau L_f + 6L(n-\tau)/(n-1)}$, we have 
$$
\mathbb{E}[\|x^k - x^*\|^2] \leq \left( 1 - \min\left\{ \frac{\tau}{2n}, \frac{\mu}{2{\cal L}}  \right\}  \right)^k \left( \|x^0-x^*\|^2 + \frac{4\cA \tau}{n^3}{\cal W}^0 \right). 
$$

\subsection{Proof of Theorem~\ref{Th:gconvex}}

From Lemma~\ref{Lem:key}, by choosing $\mu = 0$ and $p=0$, we have 
$$
\mathbb{E}_k[\Psi^{k+1}_0] \leq \Psi^k_0  - \frac{2\gamma\tau}{n}\left( 1 - \frac{\tau\gamma}{n}\left(\cB L_f + \frac{4\cA L}{n} \right)   \right) (f(x^k) - f(x^*)). 
$$
Taking expectations again and applying the tower property, we have 
$$
 \frac{2\gamma\tau}{n}\left( 1 - \frac{\tau\gamma}{n}\left(\cB L_f + \frac{4\cA L}{n} \right)   \right) \mathbb{E}[f(x^k) - f(x^*)] \leq \mathbb{E}[\Psi^k_0] - \mathbb{E}[\Psi^{k+1}_0].  
$$
Since $\gamma = \frac{n}{2\tau(\cB L_f + 4\cA L/n)}$, we have 
$$
 \mathbb{E}[f(x^k) - f(x^*)] \leq \frac{n}{\gamma \tau} \mathbb{E}[\Psi^k_0] - \mathbb{E}[\Psi^{k+1}_0], 
$$
which implies that 
\begin{eqnarray*}
\mathbb{E}[f(x^a) - f(x^*)] &=& \frac{1}{k+1} \sum_{i=0}^k \mathbb{E}[ f(x^i) - f(x^*) ] \\ 
&\leq& \frac{1}{k+1} \cdot \frac{n}{\gamma \tau} \left(  \Psi^0 - \mathbb{E}[\Psi^{k+1}_0]  \right) \\ 
&\leq&  \frac{1}{k+1} \cdot \frac{n}{\gamma \tau} \cdot \Psi^0_0 \\
&=&2\left(\cB L_f + \frac{4\cA L}{n}\right) \frac{ \left(\|x^0 - x^*\|^2 + \frac{2\cA \tau}{n^3}{\cal W}^0 \right)}{k+1}. 
\end{eqnarray*}

\section{Proofs: Section~\ref{sec:non-convex}}

\subsection{Proof of Lemma~\ref{lm:dxnonconvex}}

From (\ref{eq:xk+1-k}) and $p_i = \frac{\tau}{n}$, we have 
\begin{eqnarray*}
\mathbb{E}_k[\|x^{k+1} - x^k\|^2] &=& \frac{1}{n^2}\mathbb{E}_k\left[ \left\| \sum_{i\in S^k} \left(  x^k - \phi_i^k - \gamma f^{\prime}_i(x^k) + \gamma f^{\prime}_i(\phi^k_i)   \right) \right\|^2 \right] \\
&=& \frac{\tau^2}{n^4}\mathbb{E}_k\left[ \left\| \sum_{i\in S^k} \frac{1}{p_i} \left(  x^k - \phi_i^k - \gamma f^{\prime}_i(x^k) + \gamma f^{\prime}_i(\phi^k_i)   \right) \right\|^2 \right] \\ 
&\overset{\eqref{eq:sampling_asumption}}{\leq}& \frac{\tau^2\cA }{n^4}\sum_{i=1}^n\| x^k - \phi_i^k - \gamma f^{\prime}_i(x^k) + \gamma f^{\prime}_i(\phi^k_i) \|^2 + \frac{\tau^2\gamma^2\cB}{n^2} \|f^{\prime}(x^k)\|^2. 
\end{eqnarray*}

For the first term, since $f_i$ is $L$-smooth, we have 
\begin{eqnarray*}
\| x^k - \phi_i^k - \gamma f^{\prime}_i(x^k) + \gamma f^{\prime}_i(\phi^k_i) \|^2 &\leq& 2\|x^k - \phi^k_i\|^2 + 2\gamma^2 \|f^{\prime}_i(x^k) - f^{\prime}_i(\phi^k_i)\|^2 \\ 
&\leq& 2(1 + \gamma^2L^2)\|x^k - \phi^k_i\|^2. 
\end{eqnarray*}
Thus, 
$$
\mathbb{E}_k[\|x^{k+1} - x^k\|^2] \leq \frac{2\tau^2\cA(1+\gamma^2L^2)}{n^3} \cdot \frac{1}{n}\sum_{i=1}^n \|x^k - \phi^k_i \|^2 + \frac{\tau^2\gamma^2\cB}{n^2}\|f^{\prime}(x^k)\|^2. 
$$

\subsection{Proof of Lemma~\ref{lm:phik+1nonconvex}}

First, we have 
\begin{eqnarray*}
\frac{1}{n} \sum_{i=1}^n\|x^{k+1} - \phi^{k+1}_i \|^2 &=& \frac{1}{n} \sum_{i=1}^n \|x^{k+1} - x^k + x^k - \phi^{k+1}_i\|^2 \\
&=& \|x^{k+1} - x^k\|^2 + \frac{1}{n} \sum_{i=1}^n \|x^k - \phi^{k+1}_i \|^2 + \frac{2}{n} \sum_{i=1}^n \langle x^{k+1} -x^k, x^k - \phi^{k+1}_i \rangle. 
\end{eqnarray*}

For the third term, we have 
$$
\frac{2}{n} \sum_{i=1}^n \langle x^{k+1} -x^k, x^k - \phi^{k+1}_i \rangle = \frac{2}{n} \sum_{i=1}^n \langle x^{k+1} -x^k, x^k-\phi^k_i \rangle - \frac{2}{n} \sum_{i\in S^k} \langle x^{k+1}-x^k, x^k - \phi^k_i \rangle. 
$$

If $|S^k| = 0$, then $\sum_{i\in S^k} \langle x^{k+1}-x^k, x^k - \phi^k_i \rangle = 0$. If $|S^k| \geq 1$, then 
\begin{eqnarray*}
\left| \frac{2}{n} \sum_{i\in S^k} \langle x^{k+1}-x^k, x^k - \phi^k_i \rangle   \right| &\leq& \frac{1}{n} \sum_{i\in S^k} \left|  2 \langle x^{k+1}-x^k, x^k - \phi^k_i \rangle \right| \\ 
&\leq& \frac{1}{n} \sum_{i\in S^k} \left(  \frac{6Mn}{\tau|S^k|} \|x^{k+1} -x^k\|^2 + \frac{\tau|S^k|}{6Mn} \|x^k - \phi^k_i \|^2  \right) \\ 
&=& \frac{6M}{\tau}\|x^{k+1} -x^k\|^2 + \frac{\tau}{6Mn^2} \sum_{i\in S^k} |S^k| \cdot \|x^k - \phi^k_i \|^2. 
\end{eqnarray*}

Thus, 
\begin{eqnarray*}
\frac{1}{n} \sum_{i=1}^n\|x^{k+1} - \phi^{k+1}_i \|^2  &\leq& \left(  \frac{6M}{\tau} +1  \right) \|x^{k+1} - x^k\|^2 +  \frac{1}{n} \sum_{i=1}^n \|x^k - \phi^{k+1}_i \|^2 \\ 
&& + \frac{2}{n} \sum_{i=1}^n \langle x^{k+1} -x^k, x^k-\phi^k_i \rangle + \frac{\tau}{6Mn^2} \sum_{i\in S^k} |S^k| \cdot \|x^k - \phi^k_i \|^2.  
\end{eqnarray*}

For the second term in above inequality, we have 
$$
\mathbb{E}_k\left[\frac{1}{n} \sum_{i=1}^n \|x^k - \phi^{k+1}_i \|^2 \right] = \left(1-\frac{\tau}{n} \right)\frac{1}{n} \sum_{i=1}^n \|x^k - \phi^k_i \|^2. 
$$
For the third term, we have 
\begin{eqnarray*}
\mathbb{E}_k\left[ \frac{2}{n} \sum_{i=1}^n \langle x^{k+1} -x^k, x^k-\phi^k_i \rangle \right] &=& \frac{2}{n} \sum_{i=1}^n \langle -\frac{\gamma \tau}{n}f^{\prime}(x^k), x^k - \phi^k_i \rangle \\ 
&=& -\frac{2\gamma \tau}{n} \langle f^{\prime}(x^k), x^k - {\bar \phi}^k \rangle \\ 
&\leq& \frac{\gamma\tau}{n} \left(  \frac{1}{\beta} \|f^{\prime}(x^k)\|^2 + \beta \|x^k - {\bar \phi}^k\|^2  \right) \\ 
&=& \frac{\gamma\tau}{n} \left(  \frac{1}{\beta} \|f^{\prime}(x^k)\|^2 + \beta \frac{1}{n^2}\left\|  \sum_{i=1}^n (x^k - \phi^k_i)  \right\|^2  \right) \\ 
&\leq&  \frac{\gamma\tau}{n} \left(  \frac{1}{\beta} \|f^{\prime}(x^k)\|^2 + \beta \cdot \frac{1}{n} \sum_{i=1}^n \|x^k - \phi^k_i \|^2  \right), 
\end{eqnarray*}
where the first inequality is from Young's inequality, and the second inequality is from the convexity of the norm $\|\cdot \|^2$. \\ 

For the fourth term, we have 
\begin{eqnarray*} 
\mathbb{E}_k\left[\frac{\tau}{6Mn^2} \sum_{i\in S^k} |S^k| \cdot \|x^k - \phi^k_i \|^2 \right] &=& \frac{\tau}{6Mn^2} \sum_{U\subseteq [n]}\sum_{i\in U} p_U |U| \cdot \|x^k - \phi^k_i\|^2 \\ 
&=& \frac{\tau}{6Mn^2} \sum_{i=1}^n p_i \sum_{U: i\in U} \frac{p_U}{p_i}|U| \cdot \|x^k - \phi^k_i\|^2 \\ 
&=& \frac{\tau^2}{6Mn^3} \sum_{i=1}^n \mathbb{E}^i[|S|] \cdot \|x^k -\phi^k_i \|^2 \\ 
&\leq& \frac{\tau^2}{6n^2}\cdot \frac{1}{n} \sum_{i=1}^n \|x^k -\phi^k_i \|^2, 
\end{eqnarray*}
where $p_U \eqdef \mathbb{P}[S = U]$ and $\mathbb{E}^i[|S|] =  \sum_{U: i\in U} \frac{p_U}{p_i}|U|$. \\

Combining all the above results, we obtain the result.

\subsection{Proof of Theorem~\ref{Th:nonconvex1}}

Since $f$ is $L_f$-smooth, we have 
$$
f(x^{k+1}) \leq f(x^k) + \langle \nabla f(x^k), x^{k+1}-x^k \rangle + \frac{L_f}{2}\|x^{k+1}-x^k\|^2, 
$$
which implies 
$$
\mathbb{E}_k[f(x^{k+1})] \leq f(x^k) - \frac{\gamma \tau}{n}\|f^{\prime}(x^k)\|^2 + \frac{L_f}{2} \mathbb{E}_k[\|x^{k+1} - x^k\|^2]. 
$$

Hence, we have 
\begin{eqnarray*}
\mathbb{E}_k[\Psi^{k+1}] &=& \mathbb{E}_k [f(x^{k+1}) + \alpha \cdot \frac{1}{n} \sum_{i=1}^n \|x^{k+1} - \phi^{k+1}_i\|^2] \\
&\leq&  f(x^k) - \frac{\gamma \tau}{n}\|f^{\prime}(x^k)\|^2 + \frac{L_f}{2} \mathbb{E}_k[\|x^{k+1} - x^k\|^2] \\ 
&& + \alpha \mathbb{E}_k\left[\frac{1}{n} \sum_{i=1}^n \|x^{k+1} - \phi^{k+1}_i\|^2 \right] \\
&\overset{Lemma ~\ref{lm:phik+1nonconvex}}{\leq}&  f(x^k) - \left(  \frac{\gamma \tau}{n} - \frac{\alpha \gamma \tau}{n\beta}  \right) \|f^{\prime}(x^k)\|^2 + \left(  \frac{L_f}{2} + \alpha \left(\frac{6M}{\tau} +1 \right)  \right) \mathbb{E}_k[\|x^{k+1} - x^k\|^2] \\ 
&& + \alpha \left( 1 - \frac{\tau}{n} + \frac{\tau^2}{6n^2} + \frac{\gamma \tau \beta}{n}  \right) \frac{1}{n} \sum_{i=1}^n \|x^k - \phi^k_i\|^2  \\ 
&\overset{Lemma ~\ref{lm:dxnonconvex}}{\leq}& f(x^k) - \left(  \frac{\gamma \tau}{n} - \frac{\alpha \gamma \tau}{n\beta} - \frac{\tau^2\gamma^2\cB}{n^2}\left(  \frac{L_f}{2} + \alpha \left(\frac{6M}{\tau} +1 \right)  \right)   \right) \|f^{\prime}(x^k)\|^2 \\ 
&& +  \alpha \left( 1 - \frac{\tau}{n} + \frac{\tau^2}{6n^2} + \frac{\gamma \tau \beta}{n}  + \left(  \frac{L_f}{2\alpha} + \frac{6M}{\tau} +1  \right) \frac{2\tau^2\cA \left(1+\gamma^2L^2 \right)}{n^3}   \right) \frac{1}{n} \sum_{i=1}^n \|x^k - \phi^k_i\|^2. 
\end{eqnarray*}

Since $\alpha = \frac{\beta}{q} = \frac{1}{2q\gamma}$, we have 
$$
\frac{\alpha \gamma \tau}{n\beta} = \frac{\gamma \tau}{nq} \leq \frac{\gamma \tau}{4n} \quad {\rm and} \quad \frac{\tau^2 \gamma^2 \cB}{n^2} \cdot \alpha \left(\frac{6M}{\tau} + 1 \right) = \frac{\gamma \tau}{n} \cdot \frac{\tau}{n} \cdot \frac{\cB (\frac{6M}{\tau} + 1)}{2q} \leq \frac{\gamma \tau}{4n}
$$
Moreover, from $\gamma \leq \frac{n}{2\cB L_f\tau}$, we have $\frac{\tau^2 \gamma^2 \cB}{n^2}\cdot \frac{L_f}{2} \leq \frac{\gamma \tau}{4n}$. Hence 
$$
 \frac{\gamma \tau}{n} - \frac{\alpha \gamma \tau}{n\beta} - \frac{\tau^2\gamma^2\cB}{n^2}\left(  \frac{L_f}{2} + \alpha \left(\frac{6M}{\tau} +1 \right)  \right) \geq \frac{\gamma \tau}{4n}. 
$$
From $\frac{n^2}{\tau \cA} \geq 24 \left(\frac{6M}{\tau} + 1 \right)$, we have $\left(\frac{6M}{\tau} + 1 \right)\frac{2\tau \cA}{n^2} \leq \frac{1}{12}$. From 
$$
\gamma \leq \min \left\{  \frac{n^2/\tau \cA}{24qL_f},  \frac{(n^2/\tau \cA)^{\frac{1}{3}}}{(24qL_fL^2)^{\frac{1}{3}}} , \frac{(n^2/\tau \cA)^{\frac{1}{2}}}{(24(6M/\tau +1)L^2)^{\frac{1}{2}}}  \right\}, 
$$
we have 
$$
\frac{L_f}{2\alpha} \cdot \frac{2\tau \cA}{n^2} = L_fqr\cdot \frac{2\tau \cA}{n^2} \leq \frac{1}{12}, \quad \left(\frac{6M}{\tau} + 1 \right)\frac{2\tau \cA}{n^2}\gamma^2L^2 \leq \frac{1}{12}, 
$$
and 
$$
\frac{L_f}{2\alpha} \frac{2\tau \cA\gamma^2L^2}{n^2} = L_fq\gamma^2L^2 \cdot \frac{2\tau \cA}{n^2} \leq \frac{1}{12}. 
$$
Hence, we have 
\begin{eqnarray*}
1 - \frac{\tau}{n} + \frac{\tau^2}{6n^2} + \frac{\gamma \tau \beta}{n}  + \left(  \frac{L_f}{2\alpha} + \frac{6M}{\tau} +1  \right) \frac{2\tau^2\cA (1+\gamma^2L^2)}{n^3} &\leq& 1 - \frac{\tau}{n} + \frac{\tau^2}{6n^2} + \frac{\gamma\tau\beta}{n} + \frac{\tau}{3n} \\ 
&=& 1 - \frac{\tau}{n} + \frac{\tau^2}{6n^2} + \frac{\tau}{2n} + \frac{\tau}{3n}\\ 
&\leq& 1. 
\end{eqnarray*}
Therefore, we have 
$$
\mathbb{E}_k [\Psi^{k+1}] \leq \Psi^k - \frac{\gamma \tau}{4n}\|\nabla f(x^k) \|^2. 
$$

\subsection{Proof of Corollary~\ref{co:nonconvex1}}

If $\gamma$ satisfies (\ref{eq:gammanonconvex}), from Theorem~\ref{Th:nonconvex1}, we have 
$$
\mathbb{E}[\|\nabla f(x^k)\|] \leq \frac{4n}{\gamma \tau} (\mathbb{E}[\Psi^{k}] - \mathbb{E}[\Psi^{k+1}]), 
$$
which implies that 
\begin{eqnarray*}
	\mathbb{E}[\|\nabla f(x^a)\|^2] &=& \frac{1}{k+1} \sum_{i=0}^k \mathbb{E}[\| \nabla f(x^i)\|^2] \\ 
	&\leq& \frac{1}{k+1} \cdot \frac{4n}{\gamma \tau} \left(  \Psi^0 - \mathbb{E}[\Psi^{k+1}]  \right) \\ 
	&=&  \frac{1}{k+1} \cdot \frac{4n}{\gamma \tau} \left(  f(x^0) -  \mathbb{E}[f(x^{k+1})] - \alpha \mathbb{E}\left[\frac{1}{n}\sum_{i=1}^n \|x^{k+1} -\phi^{k+1}_i\|^2 \right]  \right) \\
	&\leq& \frac{4n}{\gamma \tau}\cdot \frac{f(x^0) - f(x^*)}{k+1}. 
\end{eqnarray*}

\clearpage

\section{Comparison of Linear Rates for Strongly Convex $f$}\label{sec:table3}

\begin{table}[h]
	\begin{center}
		\begin{tabular}{|c|c|c|c|}
			\hline
			Reference & Stepsize & Convergence rate &  Assumptions  \\
			\hline
			\begin{tabular}{c}
				\tiny Mairal \\ICML 2013~\cite{mairal2013optimization}\end{tabular} 
			& $\nicefrac{1}{L}$ & $\left(1-\tfrac{2\mu}{n(L+ \mu)}\right)^k$  &  \begin{tabular}{c}  $\mu$-strong convexity \end{tabular} \\
			\hline
			\begin{tabular}{c}
				\tiny Mairal \\ICML 2013~\cite{mairal2013optimization}\end{tabular} 
			& $\nicefrac{1}{\mu}$ &  $\frac{n}{\mu}\left(1-\tfrac{1}{3n}\right)^k$  &  \begin{tabular}{c} $\mu$-strong convexity \\ $\kappa = {\cal O}(n)$ \end{tabular} \\
			\hline
			\begin{tabular}{c}\tiny Defazio, Caetano \& Domke\\ICML 2014~\cite{defazio2014finito}\end{tabular}
			& $\nicefrac{1}{\mu}$ & $\tfrac{1}{\mu}\left(1-\tfrac{1}{2n}\right)^k$ &  \begin{tabular}{c} $\mu$-strong convexity \\ $\kappa = {\cal O}(n)$ \end{tabular}\\
			\hline
			\begin{tabular}{c}\tiny Lin, Mairal \& Harchaoui\\NIPS 2015~\cite{lin2015universal} \end{tabular} & $\nicefrac{1}{\mu}$ &  $\tfrac{1}{\tau_1} \left(  1 - \tau_1  \right)^{k+1}$& \begin{tabular}{c} $\mu$-strong convexity \\ $\tau_1 \geq \min\{  \tfrac{\mu}{4L}, \frac{1}{2n}  \}$ \end{tabular} \\ 
			\hline 
			{\bf THIS WORK} & $\nicefrac{n}{\tau {\cal L}}$ & $\left( 1 - \min\left\{ \tfrac{\tau}{2n}, \tfrac{\mu}{2{\cal L}}, \tfrac{\mu n^2}{8\cA \tau {\cal L}}  \right\}  \right)^k$ & \begin{tabular}{c} $\mu$-strong convexity \\ ${\cal L} = \cB L_f+ \tfrac{6\cA L}{n} $ \\  Assumption \ref{as:S} \end{tabular} \\ 
			\hline
		\end{tabular}
		\caption{Comparison of known and new rates established for MISO in the strongly convex case.  }
			\label{tbl:rate}
	\end{center}
\end{table}

\end{document}